\documentclass[12pt]{amsart}
\usepackage{pdfsync}
\usepackage{graphicx,xcolor}
\usepackage{amssymb}
\usepackage{amsmath}
\usepackage{amsthm}
\usepackage{enumerate}
\usepackage{subcaption}
\usepackage{esvect}
\usepackage[left=3cm,top=3.8cm,right=3cm]{geometry} 
\usepackage{ucs}
\usepackage{cite}
\usepackage{amsxtra}
\usepackage{epstopdf}
\usepackage{bbold}
\usepackage[utf8x]{inputenc}
\usepackage{verbatim}                                  
\usepackage[toc,page]{appendix}                        
\usepackage{pdfsync} 

\usepackage[pagebackref]{hyperref}
\hypersetup{
   colorlinks,
    linkcolor={red!60!black},
    citecolor={blue!60!black},
    urlcolor={blue!90!black}
}

\DeclareMathOperator{\rad}{rad}
\DeclareMathOperator{\dist}{dist}

\DeclareMathOperator{\diam}{diam}
\DeclareMathOperator{\conv}{conv}

\newtheorem{theorem}{Theorem}
\newtheorem{definition}[theorem]{Definition}
\newtheorem{proposition}[theorem]{Proposition}
\newtheorem{corollary}[theorem]{Corollary}
\newtheorem{observation}[theorem]{Observation}
\newtheorem{lemma}[theorem]{Lemma}
\newtheorem{example}[theorem]{Example}

\newtheorem{conjecture}[theorem]{Conjecture}

\newcommand{\R}{\mathbb{R}}

\def\N{{\mathbb N}}

\def\eps{{\varepsilon}}
\makeatletter
\def\moverlay{\mathpalette\mov@rlay}
\def\mov@rlay#1#2{\leavevmode\vtop{%
   \baselineskip\z@skip \lineskiplimit-\maxdimen
   \ialign{\hfil$\m@th#1##$\hfil\cr#2\crcr}}}
\newcommand{\charfusion}[3][\mathord]{
    #1{\ifx#1\mathop\vphantom{#2}\fi
        \mathpalette\mov@rlay{#2\cr#3}
      }
    \ifx#1\mathop\expandafter\displaylimits\fi}
\makeatother

\newcommand{\cupdot}{\charfusion[\mathbin]{\cup}{\cdot}}






\begin{document}
\thispagestyle{empty}

\title{A survey on Newhouse thickness, Fractal intersections and Patterns}

\author{Alexia Yavicoli}
\address{Department of Mathematics, the University of British Columbia. 1984 Mathematics Road, Vancouver BC V6T 1Z2, Canada}
\email{alexia.yavicoli@gmail.com, yavicoli@math.ubc.ca}

\begin{abstract}
In this article, we introduce a notion of size for sets called thickness that can be used to guarantee that two Cantor sets intersect (the Gap Lemma), and show a connection among Thickness, Schmidt Games and Patterns. We work mostly in the real line, but we also introduce the topic in higher dimensions.
\end{abstract}

\maketitle

\tableofcontents

\section{Newhouse's thickness}


In the 1970s, S.~Newhouse \cite{Newhouse,Newhouse2} defined \emph{thickness} on the real line. Thickness is a notion of size of a compact set, and Newhouse gave in his famous \emph{Gap Lemma} a simple condition involving thickness that ensures that two compact sets intersect. Since then, many mathematicians working on dynamical systems and fractal geometry were interested in this notion of size (e.g. \cite{Astels,HKY,MT21,ST20,Williams,Y20,Y21,FalconerYavicoli,y22,BP22}).

Before giving the definition of thickness and stating the Gap Lemma, we are going to see any compact set $C\subset\R$ as the result of sequentially ``poking holes'' starting with an interval (the convex hull of the compact set):

Let $C$ be a compact set in $\R$. We denote by $I_1$ the convex hull of $C$. There is a sequence $(G_n)_n$ (that might be finite) formed by disjoint bounded open intervals that are the path-connected components of $I_1 \setminus C$.

Since these intervals are disjoint and contained in the finite interval $I_1$, we can assume that they are ordered by non-increasing length (in fact, in case there are infinitely many of them we have $\lim_{n \to \infty}|G_n|=0$). If there are several intervals of the same length, we choose any ordering by non-increasing length. We can construct the compact set $C$ by removing these gaps in order (see Figure \ref{fig:compact}).


\begin{figure}
  \includegraphics[width=0.6\textwidth]{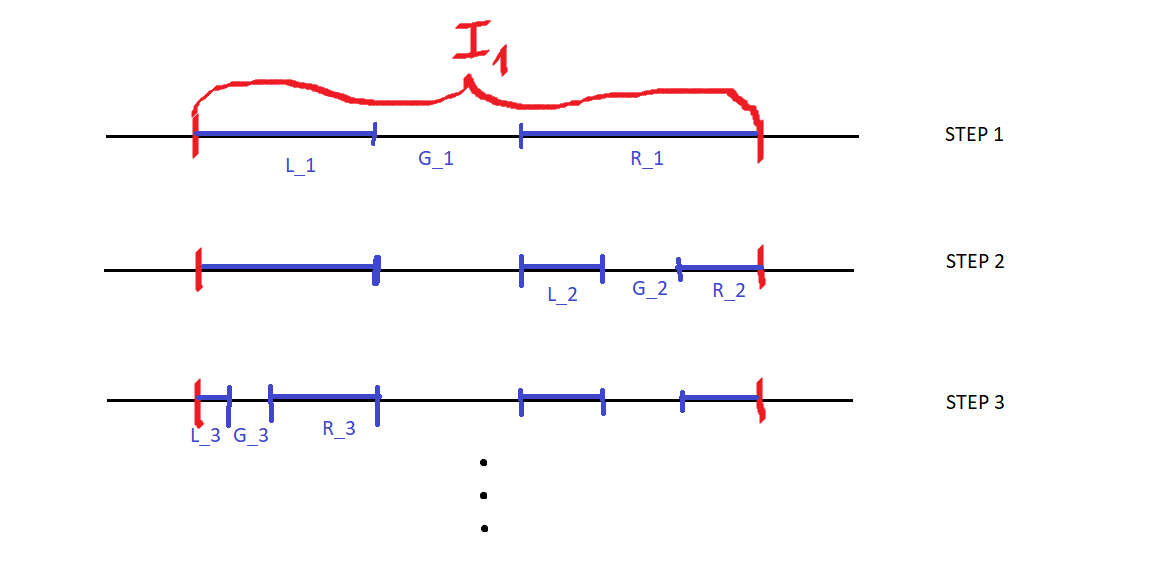}
\caption{Construction of a compact set}\label{fig:compact}
\end{figure}

When we remove $G_n$ from an interval of the previous step, we get two new intervals $L_n$ (at the left) and $R_n$ (at the right). Note that there may be degenerate intervals (singletons).

Thickness is a notion of size that looks at the smallest proportion of lengths of intervals over lengths of gaps:
\[
\tau (C):= \inf_{n} \frac{\min\{ |L_n|, |R_n| \}}{|G_n|}.
\]

\begin{observation}
When $\tau(C) \geq c$, then $\frac{|L_n|}{|G_n|}\geq c$ and $\frac{|R_n|}{|G_n|}\geq c$ for every $n$. Intuitively, this says that the set is large around each point of the set at every scale.
\end{observation}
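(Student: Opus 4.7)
The inequality part of the statement is immediate from unpacking the definition of $\tau(C)$. My plan is simply the following: fix $n$ and use the fact that the infimum of a family of nonnegative numbers is at most any member of the family. Since
\[
\tau(C)=\inf_{m}\frac{\min\{|L_m|,|R_m|\}}{|G_m|}\geq c,
\]
every term of this infimum is $\geq c$, so in particular
\[
\frac{\min\{|L_n|,|R_n|\}}{|G_n|}\geq c.
\]
Combining this with $\min\{|L_n|,|R_n|\}\leq |L_n|$ and $\min\{|L_n|,|R_n|\}\leq |R_n|$ and dividing by $|G_n|>0$ yields the two desired inequalities. This is the whole content of the formal claim; there is no obstacle, only a matter of chaining a definition and two trivial comparisons.

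For the intuitive statement that follows, I would not give a formal proof (the observation just records what the inequalities \emph{mean}), but I would informally justify it by noting the following picture. Any point $x\in C$ either lies in the closure of some bridge produced at step $n$, or is approached by such bridges as $n\to\infty$. At the moment a gap $G_n$ of size $|G_n|$ is removed next to $x$, the corresponding bridges $L_n$ and $R_n$ on either side have length at least $c|G_n|$. Thus, however small a scale $r$ one fixes, by choosing $n$ so that $|G_n|$ is comparable to $r$ one finds an interval of $C$-bridges of length at least $cr$ within distance $O(r)$ of $x$. This is the precise meaning of ``$C$ is large around each point at every scale'' that the observation alludes to, and it is the content that will be used downstream (e.g.\ in the Gap Lemma).
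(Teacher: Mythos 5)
Your proof is correct and matches the paper's treatment: the paper states this observation without proof precisely because it follows immediately from the definition of the infimum, which is exactly the one-line argument you give. The informal discussion of the intuitive meaning is a reasonable gloss and requires no formal justification.
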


\begin{observation}
In case $C$ has at least an isolated point $x$, then there exists $n$ so that $L_n=\{x\}$ or $R_n=\{x\}$, and so $\tau(C)=0$.
\end{observation}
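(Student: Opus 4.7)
The plan is to exhibit an explicit index $n$ for which $L_n = \{x\}$ or $R_n = \{x\}$; then $\min\{|L_n|,|R_n|\} = 0$ while $|G_n| > 0$, forcing the infimum defining $\tau(C)$ to be $0$.

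First I would observe that, since $x$ is isolated in the compact set $C$, there is at least one gap of $C$ immediately adjacent to $x$ inside $I_1$. Setting $x^+ := \inf\{y \in C : y > x\}$ and $x^- := \sup\{y \in C : y < x\}$ whenever these quantities are defined, isolation gives $x < x^+$ and $x^- < x$ strictly, and each of $(x^-,x)$ and $(x,x^+)$ (whichever lies inside $I_1$) is a connected component of $I_1 \setminus C$, hence appears as some $G_n$ in our enumeration. If $x$ lies in the interior of $I_1$ both adjacent gaps exist; if $x = \min C$ only the right gap exists, and symmetrically for $x = \max C$.

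The heart of the argument is to track the interval containing $x$ through the poking-holes process. Subdividing $[a,b]$ by the removal of a gap $(u,v) \subset [a,b]$ produces $[a,u]$ and $[v,b]$, so the outer endpoints $a$ and $b$ are inherited by the subintervals. Consequently, once $x$ has become the left (resp.\ right) endpoint of its containing interval, it remains so until a gap with $x$ as its right (resp.\ left) endpoint is processed; and such gaps are precisely $(x^-,x)$ and $(x,x^+)$, of which there are at most two. Let $n$ be the larger of the indices of the adjacent gap(s). Just before step $n$, $x$ is already at the corresponding endpoint of the interval $J$ containing $G_n$: either the other adjacent gap was processed earlier and pushed $x$ to that endpoint, or $x$ has been that endpoint of $I_1$ from the outset. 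Removing $G_n$ from $J$ then leaves the side on which $x$ sits equal to $\{x\}$.

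I do not expect any serious obstacle here; the only step requiring care is the bookkeeping that verifies the endpoint position of $x$ persists through the intermediate subdivisions, and this reduces to the trivial inheritance of endpoints noted above. The degenerate case $C = \{x\}$ (in which $I_1$ has no gaps at all and the infimum defining $\tau(C)$ is vacuous) should be tacitly excluded from the statement.
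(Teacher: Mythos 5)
Your argument is correct and supplies exactly the bookkeeping the paper leaves implicit (the observation is stated there without proof): isolation forces the adjacent gap(s) $(x^-,x)$ and $(x,x^+)$ to exist, outer endpoints are inherited under subdivision, and removing the later-indexed adjacent gap leaves $\{x\}$ on one side, so $\min\{|L_n|,|R_n|\}=0$ while $|G_n|>0$, giving $\tau(C)=0$. One small wording slip worth fixing: once $x$ is the left endpoint of its containing interval, the gap whose removal produces $L_n=\{x\}$ is the one having $x$ as its \emph{left} endpoint, namely $(x,x^+)$, not its right endpoint — this does not affect the argument, and your exclusion of the degenerate case $C=\{x\}$ is appropriate.
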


\begin{lemma} Newhouse's thickness is well defined: any non-increasing order for the sequence of gaps gives the same value.
\end{lemma}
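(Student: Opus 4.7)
The plan is to reduce well-definedness to invariance under a single elementary move, and then verify that move directly. Two non-increasing enumerations of the gaps can differ only within each length class $\{G : |G| = g\}$; because $\sum_n |G_n| \leq |I_1|$, each length class is finite, and any permutation of a finite set is a composition of adjacent transpositions. It therefore suffices to show that swapping two consecutive gaps $G_n$ and $G_{n+1}$ of the same length $g$ leaves $\tau(C)$ unchanged. For any step $k \notin \{n,n+1\}$, the set $\{G_j : j<k\}$ of gaps removed strictly before step $k$ is not altered by a transposition, so the bridges $L_k, R_k$ (and hence the $k$-th ratio) coincide in both orderings. Only the two ratios at steps $n$ and $n+1$ can change, so it suffices to show that the minimum of this pair is the same before and after the swap.

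If $G_n$ and $G_{n+1}$ lie in different connected components of $I_1 \setminus \bigcup_{k<n} G_k$, the swap has no effect on either gap's bridges, and the two ratios are individually unchanged. Otherwise, call the common component $[a,b]$, and assume without loss of generality that $G_n = (c,d)$ lies to the left of $G_{n+1} = (e,f)$. Set $\alpha := c-a$, $\beta := e-d$, $\gamma := b-f$. Tracking the bridges directly, in the original order the two ratios are
\[
\frac{\min\{\alpha,\ \gamma+g+\beta\}}{g} \quad\text{and}\quad \frac{\min\{\beta,\gamma\}}{g},
\]
while in the swapped order they are
\[
\frac{\min\{\alpha+g+\beta,\ \gamma\}}{g} \quad\text{and}\quad \frac{\min\{\alpha,\beta\}}{g}.
\]
Since $\alpha + g + \beta \geq \alpha$ and $\gamma + g + \beta \geq \gamma$, the minimum of the pair in either case equals $\min\{\alpha,\beta,\gamma\}/g$. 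Combined with the previous paragraph, the infimum $\tau(C)$ is preserved by each swap.

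I expect the only real obstacle to be keeping the bookkeeping in the same-component case straight; the clean underlying observation is that although the individual ratios at the two swapped positions genuinely change, their minimum depends only on the ordering-independent residual bridge lengths $\alpha, \beta, \gamma$ of the final configuration. Applying this invariance one length class at a time then yields the same value of $\tau(C)$ for any two non-increasing orderings.
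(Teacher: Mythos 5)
Your proposal is correct and follows essentially the same route as the paper: reduce to an adjacent transposition of two equal-length gaps, observe that only the two affected ratios change and that their minimum equals $\min\{\alpha,\beta,\gamma\}/g$ in either order, then compose swaps within each (finite) length class. The only place you are terser than the paper is the final passage from one length class to infinitely many, where the paper notes that reorderings up to each threshold step $N_k$ leave the sequence tails unchanged so the infimum is preserved in the limit; your "one length class at a time" needs exactly that one-line justification.
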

\begin{proof} One can prove that in case there are two gaps in the sequence with the same length $g:=|G_n|=|G_{n+1}|$, switching their order in the sequence gives the same thickness:
When the gaps are erased from different parents the quotients do not change, so the infimum does not change. In case the gaps $G_n$ and $G_{n+1}$ are erased from the same parent, the quotients may change but the infimum is the same when removing them in any order. Let's see the latter case:
\begin{figure}
  \includegraphics[width=0.6\textwidth]{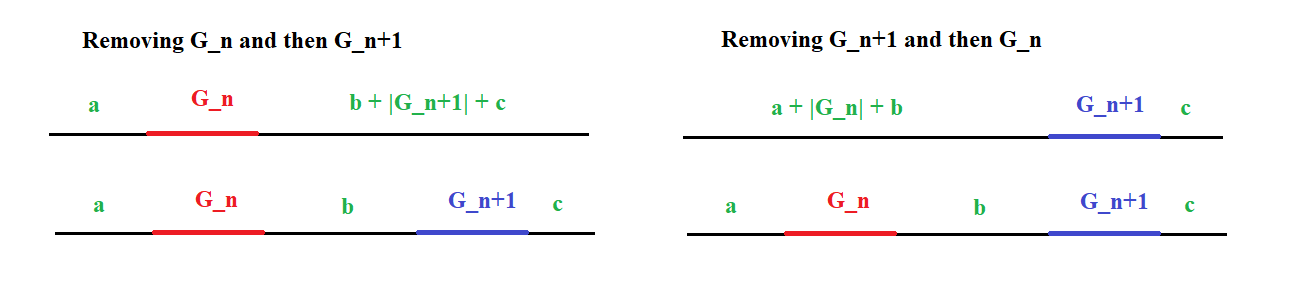}
\caption{Removing gaps of equal length in two possible orders}\label{fig:gaporder}
\end{figure}
When we remove $G_n$ and then $G_{n+1}$, see Figure \ref{fig:gaporder}, the quotients appearing in the definition of thickness are: $\frac{a}{g}, \frac{b+g+c}{g}, \frac{b}{g}, \frac{c}{g}$.
When we remove $G_{n+1}$ and then $G_n$ instead, see Figure \ref{fig:gaporder}, the quotients appearing in the definition of thickness are: $\frac{a+g+b}{g}, \frac{c}{g}, \frac{a}{g}, \frac{b}{g}$.
Note that \[\inf \{\frac{a}{g}, \frac{b+g+c}{g}, \frac{b}{g}, \frac{c}{g} \}=\inf\{ \frac{a}{g}, \frac{b}{g}, \frac{c}{g}\}=\inf \{ \frac{a+g+b}{g}, \frac{c}{g}, \frac{a}{g}, \frac{b}{g}\}.\]

Now, observe that since there are finitely many gaps with a fixed length, in finite steps one can order all gaps with the same length as $|G_n|$ through applying permutations as before.

After this, note that there is a sequence of steps $(N_k)_k$, where $|G_{N_k}|>|G_{N_k +1}|$, in which the thickness does not change when we reorder gaps with the same length up to those steps (we may reorder the first $N_k$ terms of the sequence, but sequence tails remain the same). We conclude that the thickness does not change.
\end{proof}

\begin{observation} In general the order of the sequence of gaps matters (i.e.: if we consider the sequence of gaps in an order that is not by non-increasing length, we may get a different result).
\end{observation}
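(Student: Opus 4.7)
The observation is an existential claim — it asserts that \emph{some} ordering which is not by non-increasing length yields a different value of the infimum in the definition of $\tau$. The natural plan is therefore to produce a single explicit counterexample, and since the previous lemma already handles reorderings within a single length class, any counterexample must swap two gaps whose lengths are strictly different.

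The simplest case to check is a compact set with exactly two gaps of distinct lengths. I would take, for instance, $C = [0,2] \cup [3,4] \cup [6,12]$, whose convex hull is $[0,12]$ and whose gaps are $(2,3)$ of length $1$ and $(4,6)$ of length $2$. A short direct computation shows that processing the gaps in non-increasing order of length gives the ratios $\min\{4,6\}/2 = 2$ and then $\min\{2,1\}/1 = 1$, so the infimum is $1$. Processing them in the reversed (increasing) order gives $\min\{2,9\}/1 = 2$ and then $\min\{1,6\}/2 = 1/2$, with infimum $1/2$. Since $1 \neq 1/2$, the two orderings produce genuinely different values, which proves the observation.

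The only step that requires care is choosing the positions of the gaps so that the drop in infimum is visible, and this is governed by one geometric fact: if a small gap is removed first, the convex hull splits into two children of possibly very unequal lengths; when a larger gap is subsequently removed from the longer child, one of the new pieces $L$ or $R$ can be forced to be much shorter than in the opposite order, sharply lowering the corresponding ratio $\min\{|L|,|R|\}/|G|$. Placing the large gap close to one end of this long child, as in the example above, makes the drop large enough to alter the infimum. No genuine obstacle arises beyond this placement.
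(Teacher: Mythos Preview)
Your proposal is correct. The explicit example $C=[0,2]\cup[3,4]\cup[6,12]$ works exactly as you compute: processing the gaps in non-increasing order gives infimum $1$, while processing them in increasing order gives infimum $1/2$, so the order genuinely matters.

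As for comparison with the paper: the paper does not actually prove this observation at all --- it is stated as a bare remark with no accompanying example or argument. Your explicit two-gap counterexample therefore goes beyond what the text provides, and it is the natural minimal way to justify the claim. Your closing paragraph explaining \emph{why} such examples exist (removing a small gap first can leave a long child in which a larger gap then produces a very short piece) is a helpful piece of intuition that the paper also omits.
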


\begin{observation}
The definition of thickness is invariant under homothetic functions: $\tau(aC+b)=\tau(C)$ for any $a \neq 0$, because homothetic functions preserve proportions. But, in general, thickness is \emph{not} invariant under smooth diffeomorphisms.
\end{observation}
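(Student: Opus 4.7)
The proof splits into the two independent assertions of the observation.

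For the homothetic invariance, the plan is a direct computation. Let $\psi(x) = ax+b$ with $a \neq 0$. First I would note that $\psi$ sends the convex hull of $C$ to the convex hull of $\psi(C)$, and sends each bounded complementary component $G_n$ of $I_1 \setminus C$ to a bounded complementary component of $\psi(I_1) \setminus \psi(C)$ of length $|a| \cdot |G_n|$. Similarly, the step-$n$ left and right intervals become intervals of lengths $|a| \cdot |L_n|$ and $|a| \cdot |R_n|$ (with the roles of left and right interchanged when $a<0$, which is harmless since we take the minimum). Because all lengths are multiplied by the same factor $|a|$, the non-increasing ordering of gap lengths is preserved; I would appeal to the well-definedness lemma just proved to handle any tie-breaking ambiguity. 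Each ratio $\min\{|L_n|, |R_n|\} / |G_n|$ is then unchanged, and so is its infimum.

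For the second assertion the task is to exhibit one explicit counterexample. I would take $C = [0,1/3] \cup [2/3,1]$, which has a single gap $G_1 = (1/3, 2/3)$ of length $1/3$ flanked by intervals of length $1/3$ each, so $\tau(C) = 1$. I would then apply the smooth diffeomorphism $\phi(x) = e^x$, defined on a neighborhood of $[0,1]$, which has non-constant derivative. Direct computation gives $\phi(C) = [1, e^{1/3}] \cup [e^{2/3}, e]$, with single complementary gap $(e^{1/3}, e^{2/3})$ of length $e^{2/3} - e^{1/3} = e^{1/3}(e^{1/3}-1)$ and flanking intervals of lengths $e^{1/3} - 1$ and $e - e^{2/3}$. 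The left flanking interval is the shorter one, and the resulting ratio is $(e^{1/3}-1)/(e^{1/3}(e^{1/3}-1)) = e^{-1/3}$. Hence $\tau(\phi(C)) = e^{-1/3} < 1 = \tau(C)$.

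The only real conceptual obstacle is seeing why a diffeomorphism can alter thickness at all, given that $\phi$ is approximately affine at infinitesimal scales and so preserves ratios there. The resolution is that thickness is an infimum over \emph{all} scales, including coarse ones: a diffeomorphism with non-constant derivative stretches different parts of $C$ by different amounts, and this coarse-scale non-uniform scaling is enough to change a ratio that previously attained the infimum.
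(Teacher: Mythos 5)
Your proof is correct and follows the same route the paper's one-phrase justification gestures at: an affine map scales every gap and bridge length by the common factor $|a|$, so each ratio $\min\{|L_n|,|R_n|\}/|G_n|$ and hence the infimum is unchanged, with ties absorbed by the well-definedness lemma. The paper offers no explicit counterexample for the second assertion, and yours checks out: $\tau\bigl([0,1/3]\cup[2/3,1]\bigr)=1$ while the image under $x\mapsto e^x$ has its single gap of length $e^{1/3}(e^{1/3}-1)$ flanked by intervals of lengths $e^{1/3}-1$ and $e^{2/3}(e^{1/3}-1)$, giving thickness $e^{-1/3}<1$.
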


\begin{example}[Thickness of central Cantor sets]\label{ex:thicknesscentral}
Given $\eps \in (0,1)$, let $M_{\eps}$ be the middle-$\eps$ Cantor set, which is obtained starting with the interval $[0,1]$ and then iterating the process of removing from each interval in the construction the middle open interval of relative length $\eps$ (see Figure \ref{fig:middle}).

\begin{figure}
  \includegraphics[width=0.6\textwidth]{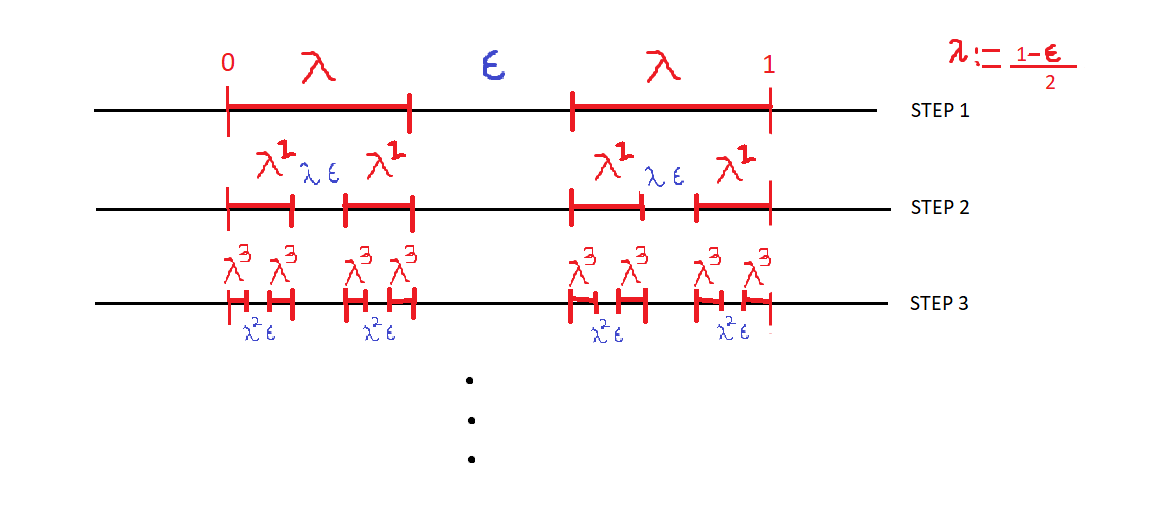}
  \caption{The middle-$\eps$ Cantor set}\label{fig:middle}
\end{figure}

Every time we remove a gap $G_n$ to get the step $m$ of the construction we have
\[\frac{\min\{|L_n|,|R_n|\}}{|G_n|}=\frac{\lambda^m}{\lambda^{m-1}\eps}=\frac{\lambda}{\eps}=\frac{1-\eps}{2\eps}.\]
Then, \[\tau(M_{\eps}):=\inf_n \frac{\min\{|L_n|,|R_n|\}}{|G_n|}=\frac{1-\eps}{2\eps}.\]

\end{example}

\section{The Gap Lemma}

\subsection{Why thickness and the Gap Lemma?}

Newhouse's motivation for defining thickness was the Gap Lemma, giving conditions for two compact sets in the real line to intersect. To motivate the definition of thickness and the assumptions of the Gap Lemma, we start by looking at the most basic non-trivial case:  two sets where each of them is formed by a union of two closed disjoint intervals.


If the compact sets are disjoint, then we have the following possible cases:
\begin{itemize}
\item Their convex hulls are disjoint.
\begin{center}\includegraphics[height=0.8cm]{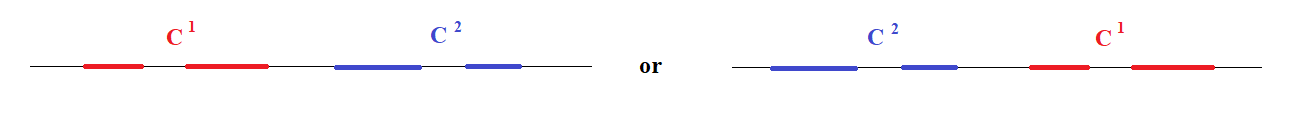}\end{center}
\item One of the sets is contained in a gap of the other set.
\begin{center}\includegraphics[height=0.8cm]{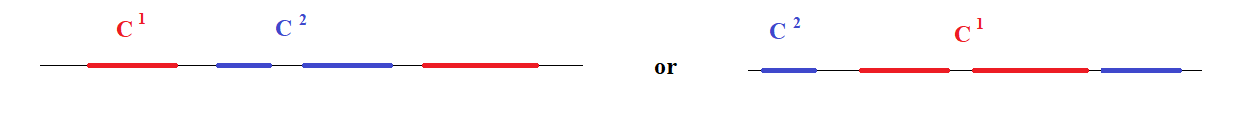}\end{center}
\item The sets are ``interleaved'', like this:
\begin{center}\includegraphics[height=1.4cm]{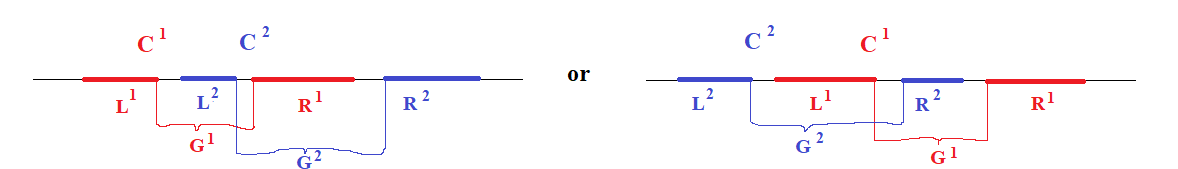}\end{center}
\end{itemize}

Let us study the first interleaved case: \begin{center}\includegraphics[height=1.4cm]{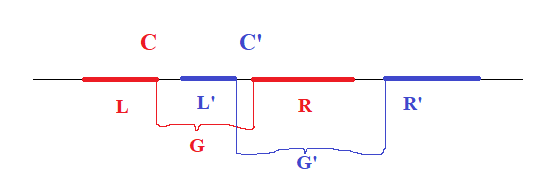}\end{center}

We have $|L^2|<|G^1|$ and $|R^1|<|G^2|$, hence  $\frac{|L^2| |R^1|}{|G^1| |G^2|} < 1$. So,
\[
\tau(C)\tau(C'):=\frac{\min \{|L^1|, |R^1| \}}{|G^1|} \frac{\min \{|L^2|, |R^2| \}}{|G^2|}< 1.
\]

The same happens with the other interleaved case. So, in this simple case we get:
\begin{lemma}[Baby Gap Lemma] \label{lem:baby-gap}
Let $C, C'$ be disjoint unions of two compact intervals.  If their convex hulls intersect, each set is not contained in a gap of the other one, and $\tau(C)\tau(C')\geq 1$, then $C\cap C'\neq\emptyset$.
\end{lemma}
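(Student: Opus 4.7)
My plan is to argue by contradiction using the case analysis already laid out in the paragraph preceding the statement. Assume $C \cap C' = \emptyset$; I will then show that the three structural hypotheses force the configuration to be of the ``interleaved'' type, and that the computation performed in the excerpt (together with its mirror image) gives $\tau(C)\tau(C') < 1$, contradicting $\tau(C)\tau(C') \geq 1$.

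\textbf{Case reduction.} Since each of $C$ and $C'$ is a disjoint union of two compact intervals, with its single gap, the excerpt classifies any two such \emph{disjoint} sets into exactly three mutually exclusive configurations: (i) their convex hulls are disjoint, (ii) one is contained in the gap of the other, or (iii) they are interleaved. By hypothesis the convex hulls intersect and neither set is contained in a gap of the other, so cases (i) and (ii) are ruled out. Therefore, assuming $C \cap C' = \emptyset$, we must be in case (iii).

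\textbf{The interleaved estimate.} Write $C = L^1 \cup R^1$ with gap $G^1$ and $C' = L^2 \cup R^2$ with gap $G^2$. The interleaved configurations come in two mirror-image subcases depending on orientation. In the subcase shown in the excerpt, the endpoints interlace so that $L^2 \subset G^1$ and $R^1 \subset G^2$, giving $|L^2| < |G^1|$ and $|R^1| < |G^2|$; hence
\[
\tau(C)\,\tau(C') = \frac{\min\{|L^1|,|R^1|\}}{|G^1|} \cdot \frac{\min\{|L^2|,|R^2|\}}{|G^2|} \leq \frac{|R^1|\,|L^2|}{|G^1|\,|G^2|} < 1.
\]
The other interleaved subcase is obtained by a left-right reflection; the identical argument with the roles of $L$ and $R$ (or of $C$ and $C'$) swapped yields the symmetric inequalities $L^1 \subset G^2$ and $R^2 \subset G^1$, and therefore $\tau(C)\tau(C') < 1$ again. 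In both subcases the strict inequality contradicts $\tau(C)\tau(C') \geq 1$, so the assumption $C \cap C' = \emptyset$ is untenable.

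\textbf{Anticipated obstacle.} There is no real analytic difficulty: the bulk of the argument is the structural case analysis, which is essentially already present in the excerpt. The only point that requires a small amount of care is making the enumeration of interleaved configurations rigorous, i.e.\ verifying that any interleaving is, up to reflection, the one pictured, and that the inclusions $L^2 \subset G^1$ and $R^1 \subset G^2$ (or their mirror image) really do follow from the assumption that all four intervals are pairwise disjoint while the convex hulls overlap and no set lies in a gap of the other. I would dispatch this by a short endpoint-ordering argument: writing the four interval endpoints in increasing order and discarding orderings corresponding to cases (i) and (ii) leaves exactly the two interleaved patterns.
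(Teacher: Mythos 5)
Your proposal is correct and follows essentially the same route as the paper: the paper's own argument is precisely the case analysis in the surrounding discussion (disjoint hulls, containment in a gap, or interleaving) combined with the crossed-product estimate $\frac{|L^2|}{|G^1|}\cdot\frac{|R^1|}{|G^2|}<1$ in the interleaved case, with the mirror case handled by symmetry. Your added remark about making the enumeration of interleaved configurations rigorous via endpoint ordering is a fair point of care, but it does not change the argument.
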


\subsection{The Gap Lemma}

Newhouse's Gap Lemma (see for example \cite[Lemma 3.5]{Newhouse} for a first version of it, or in general \cite[Lemma 4]{Newhouse2}, see also \cite[page 63]{PalisTakens}) is a natural generalization of Lemma \ref{lem:baby-gap}, but now considering general compact sets in the line. We denote the convex hull of a set $C$ by $\conv(C)$.

\begin{theorem}[Newhouse's Gap Lemma]\label{NewhouseGapLemma}
Let $C^1$ and $C^2$ be two compact sets in the real line such that:
\begin{enumerate}
\item \label{GL1} $\conv (C^1) \cap \conv (C^2)\neq \emptyset$,
\item \label{GL2} neither set lies in a gap of the other compact set,
\item \label{GL3} $\tau(C^1) \tau(C^2)\geq 1$.
\end{enumerate}
Then, $$C^1 \cap C^2 \neq \emptyset.$$
\end{theorem}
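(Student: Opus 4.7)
The plan is to argue by contradiction: assume $C^1 \cap C^2 = \emptyset$, and use (3) to produce an infinite descent of ``interleaved'' pairs of gaps whose endpoints accumulate at a common point of $C^1$ and $C^2$. Call a pair $(U, V)$ \emph{linked} if $U$ is a gap of $C^1$, $V$ is a gap of $C^2$, and their endpoints interleave (as in the interleaved configuration of Lemma~\ref{lem:baby-gap}). The first step is to show that (1), (2), together with disjointness, force the existence of a linked pair: for instance take $A = \min C^2$, which by disjointness is not in $C^1$ but by (1) and minimality lies in some gap $U = (a, b)$ of $C^1$; condition (2) then produces a point of $C^2$ outside $U$, which must be to the right of $b$, so $b \in \conv(C^2) \setminus C^2$ lies in a gap $V = (c, d)$ of $C^2$, with $a < c < b < d$.

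The core of the proof is the shrinking step that uses (3). Given a linked pair $(U, V)$ with $|V| \le |U|$ (swap $C^1$ and $C^2$ otherwise), write $U = (a, b)$, $V = (c, d)$, $a < c < b < d$. Consider the bridge $L^2$ of $C^2$ immediately left of $V$ and the bridge $R^1$ of $C^1$ immediately right of $U$; by definition of thickness, $|L^2| \ge \tau(C^2)|V|$ and $|R^1| \ge \tau(C^1)|U|$. If neither bridge reached across, i.e.\ $|L^2| < c - a$ and $|R^1| < d - b$, then $|L^2| < |U|$ and $|R^1| < |V|$, hence $\tau(C^1)\tau(C^2) < 1$, contradicting (3). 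So one of them extends; WLOG $L^2$ past $a$, meaning either $a \in C^2$ (immediate contradiction with $a \in C^1$) or $a$ lies in a new gap $V'$ of $C^2$ contained in $L^2$. A short case analysis shows $(U, V')$ is again a linked pair, and because $L^2$ only appears in the construction after $V$ is removed, the gap $V' \subset L^2$ is enumerated strictly later than $V$ in the non-increasing ordering, so $V' \ne V$ and $|V'| \le |V|$.

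Iterating this step yields an infinite sequence of pairwise distinct gaps (from $C^1$ or $C^2$, depending on which bridge extends at each step), whose sizes must tend to $0$ since the total length of gaps of a compact set is finite. A pigeonhole argument on which endpoint is straddled at each step produces a fixed point $p$ lying in infinitely many of the shrinking gaps; the endpoints of those gaps lie in the corresponding set and converge to $p$, so $p$ lies (by closedness) in both $C^1$ and $C^2$, contradicting $C^1 \cap C^2 = \emptyset$.

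The main obstacle I foresee is the combinatorial bookkeeping in the shrinking step---verifying the ``linked begets linked'' closure in both possible interleaving orientations, ensuring the monotonicity $|V'| \le |V|$ that drives the descent, and tracking which endpoint is straddled when the iterated side switches between $C^1$ and $C^2$. The thickness hypothesis (3) is used in exactly one place, the two-line ``reaches across'' dichotomy; once this is cleanly stated, the rest is an induction plus a short compactness/pigeonhole argument.
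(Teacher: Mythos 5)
Your proposal is correct and follows essentially the same route as the paper: argue by contradiction, inductively construct linked pairs of gaps where the thickness hypothesis is used exactly once per step to rule out both crossing bridges being contained in the opposite gap, and finish with a compactness argument. The only cosmetic difference is that the paper's induction carries the stronger invariant that one bridge is contained in the other gap, which forces the two gap indices to increase alternately and makes the final limit argument immediate, whereas your plain ``linked'' invariant requires the pigeonhole/compactness case split you sketch (one gap eventually fixed versus both sequences of gaps shrinking).
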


Note that if we decide to consider the unbounded path connected components of the complement of the compact set as ``gaps'', then the second hypothesis would imply the first one. But in this survey we do not consider them as gaps, since their lengths do not appear in the denominator of the definition of thickness.

\begin{observation}[Sharpness of Theorem \ref{NewhouseGapLemma}]
Given two positive numbers $\tau_1, \tau_2$ so that $\tau_1 \tau_2<1$ we can construct compact sets $C^1, C^2$ with thickness $\tau_1, \tau_2$ respectively that are not contained in a gap of the other one and their intersection is empty:
Take
\[C^1:=[0,1]\cup [1+\tau_1^{-1}, 2+\tau_1^{-1}]\] which is a compact set with thickness $\tau_1$.
Since by hypothesis $\tau_1^{-1}>\tau_2>0$, there is an $\varepsilon \in (0, \frac{\tau_1^{-1}}{2})$ so that $\frac{\tau^{-1}-2\varepsilon}{1+2\varepsilon}=\tau_2$.
Define \[C^2=[-\tau_1^{-1}+\varepsilon, -\varepsilon]\cup [1+\varepsilon, 1+\tau_1^{-1}-\varepsilon]\] which is a compact set with thickness $\tau_2$.
It is easy to see that $C^1 \cap C^2 =\emptyset$, their convex hulls intersect, and none of them is contained in a gap of the other one.
\end{observation}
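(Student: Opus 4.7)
The goal is to verify the four properties claimed for the explicit pair $C^1, C^2$: that they have thickness $\tau_1$ and $\tau_2$ respectively, that their convex hulls meet, that neither is contained in a gap of the other, and that $C^1\cap C^2=\emptyset$. The plan is to treat each item in turn, after first justifying the auxiliary choice of $\varepsilon$.

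First I would verify the existence of a valid $\varepsilon$. Consider the function $f(\varepsilon):=\frac{\tau_1^{-1}-2\varepsilon}{1+2\varepsilon}$ on $[0,\tau_1^{-1}/2]$. It is continuous and strictly decreasing with $f(0)=\tau_1^{-1}$ and $f(\tau_1^{-1}/2)=0$; by hypothesis $\tau_1\tau_2<1$ gives $\tau_1^{-1}>\tau_2>0$, so the intermediate value theorem produces $\varepsilon\in(0,\tau_1^{-1}/2)$ with $f(\varepsilon)=\tau_2$. This is the $\varepsilon$ used in the construction.

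Next I would compute the thicknesses. The set $C^1$ has a single bounded gap $(1,1+\tau_1^{-1})$ of length $\tau_1^{-1}$, with left and right pieces both of length $1$, so $\tau(C^1)=1/\tau_1^{-1}=\tau_1$. Similarly $C^2$ has a single gap $(-\varepsilon,1+\varepsilon)$ of length $1+2\varepsilon$, and both surrounding intervals have length $\tau_1^{-1}-2\varepsilon$, giving $\tau(C^2)=f(\varepsilon)=\tau_2$. Then I would observe that $\mathrm{conv}(C^1)=[0,2+\tau_1^{-1}]$ and $\mathrm{conv}(C^2)=[-\tau_1^{-1}+\varepsilon,1+\tau_1^{-1}-\varepsilon]$ overlap on the nontrivial interval $[0,1+\tau_1^{-1}-\varepsilon]$, so hypothesis (1) of the Gap Lemma is satisfied. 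For the non-containment in a gap condition, the only gap of $C^1$ is $(1,1+\tau_1^{-1})$, and $C^2$ contains $-\varepsilon<1$, so $C^2\not\subset(1,1+\tau_1^{-1})$; symmetrically $C^1$ contains $2+\tau_1^{-1}>1+\varepsilon$ while the only gap of $C^2$ is $(-\varepsilon,1+\varepsilon)$, so $C^1\not\subset(-\varepsilon,1+\varepsilon)$.

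Finally I would check disjointness by intersecting the four pairs of component intervals. Each pair fails to meet: $[0,1]$ lies strictly to the right of $[-\tau_1^{-1}+\varepsilon,-\varepsilon]$ and strictly to the left of $[1+\varepsilon,1+\tau_1^{-1}-\varepsilon]$, while $[1+\tau_1^{-1},2+\tau_1^{-1}]$ lies strictly to the right of both components of $C^2$ (the relevant inequalities are $-\varepsilon<0$, $1<1+\varepsilon$, and $1+\tau_1^{-1}-\varepsilon<1+\tau_1^{-1}$, all immediate from $\varepsilon>0$). Hence $C^1\cap C^2=\emptyset$. There is no real obstacle in the argument; the only step requiring thought is the solvability of $f(\varepsilon)=\tau_2$, which is where the hypothesis $\tau_1\tau_2<1$ is used in an essential way, and this is handled by monotonicity and the intermediate value theorem as above.
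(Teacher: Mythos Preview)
Your verification is correct and is exactly the routine check the paper leaves to the reader with ``it is easy to see''; the observation itself has no separate proof beyond the construction, and you have simply filled in the four straightforward computations (existence of $\varepsilon$ via the intermediate value theorem, the two thickness values, intersecting convex hulls, non-containment in gaps, and disjointness). There is nothing to add.
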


In order to prove the Gap Lemma we need  to introduce two definitions.

\begin{definition}
We say that two open intervals are \emph{linked} if each of them contains exactly one endpoint of the other (see Figure \ref{fig:linked}).

\begin{figure}
  \includegraphics[width=0.6\textwidth]{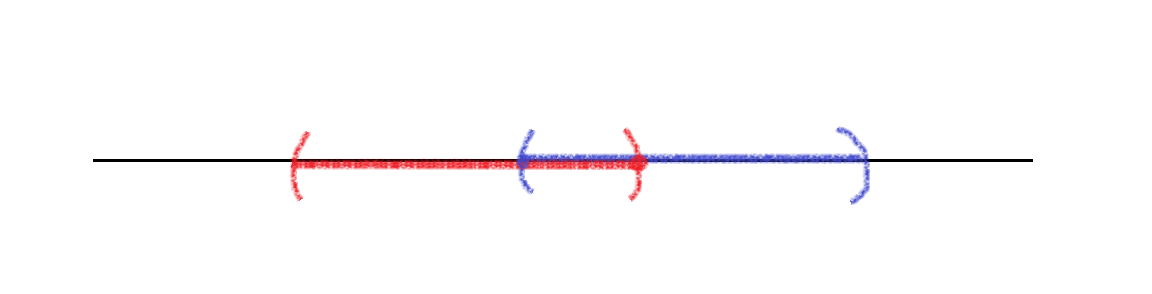}
  \caption{Linked intervals}\label{fig:linked}
\end{figure}
\end{definition}

\begin{definition}
Let $C$ be a compact set in the real line with sequence of gaps $(G_n)_n$ ordered by non-increasing length. Let $v$ be an endpoint of $G_n$.
We define the \emph{bridge} $B(v)$  associated to $v$ to be $L_n$ or $R_n$, depending on whether $v$ is the leftmost or rightmost point of $G_n$.
\end{definition}

\begin{proof}[Proof of Theorem \ref{NewhouseGapLemma}]

We are going to prove the Gap Lemma by contradiction. Let $C^1$ and $C^2$ be compact sets satisfying the assumptions of the Gap Lemma, and assume $C^1 \cap C^2 = \emptyset$.
Let $(G^1_m)_m$ and $(G^2_n)_n$ the sequences of gaps of the compact sets $C^1$ and $C^2$ ordered by non-increasing length.

It is enough to construct a sequence of pairs $(G_{m_i}^1, G_{n_i}^2)_i$ of linked gaps of $C^1$ and $C^2$ (where we advance in $n$ and $m$: $n_i<n_{i+1}$ and $m_i<m_{i+1}$). 
Then, taking $x_i$, $y_i$ to be the leftmost points of $G^1_{m_i}$, $G^2_{n_i}$, we get that  $x_i \in C^1$ and $y_i \in C^2$. Passing to subsequences if necessary, we may assume that $x_i\to x, y_i\to y$.
Observing that
\begin{align*}
\dist (x,y)&=\lim_{i \to \infty} \dist (x_i, y_i) \leq \lim_{i \to \infty} \diam (G^1_{m_i} \cup G^2_{n_i})\\
&\leq \lim_{i \to \infty} \diam (G^1_{m_i}) + \diam ( G^2_{n_i})=0,
\end{align*}
we get that $x=y\in C^1 \cap C^2$, which is a contradiction.

We are going to construct the sequence of pairs of linked gaps by induction. To begin, observe that:
\begin{itemize}
\item Any endpoint of the convex hull of $C^i$ or a gap of $C^i$ belongs to $C^i$.
\item If a point belongs to $\conv(C^i)$ but does not belong to $C^i$, then it is in a gap $G^i_m$.
\item If a point belongs to $C^i$, then (by the assumption $C^1 \cap C^2 = \emptyset$) the point is either outside of $\conv(C^j)$ ($j:=3-i$) or in a gap $G^j_n$.
\end{itemize}

In order to be able to handle the inductive step, we will prove a slightly stronger statement:
there is a sequence of pairs of gaps $G^1_{m_i}$ and $G^2_{n_i}$ that are linked, such that there is an endpoint of one so that its bridge is contained in the other gap.

\textbf{First step}.
By assumption \eqref{GL1} and symmetry, we may assume that there is an endpoint of $\conv(C^2)$ that belongs to $\conv(C^1)$, and so it is in $C^2 \cap \conv(C^1)$. But since $C^1 \cap C^2 =\emptyset$, then it belongs to $G^1_{m_1}\cap C^2$ for some gap $G^1_{m_1}$. So,
\begin{equation}\label{GLfirst} G^1_{m_1}\cap C^2 \neq \emptyset.\end{equation}

Since the endpoints of $G^1_{m_1}$ belong to $C^1$,  each of them must be either outside of $\conv(C^2)$ or in a gap of $C^2$. Since by construction $G^1_{m_1}$ contains an endpoint of $\conv(C^2)$, one of them is outside $\conv(C^2)$. The other one must belong to $\conv(C^2)$ (and therefore to a gap of $C^2$), because otherwise $C^2 \subseteq G^1_{m_1}$, contradicting the assumption \eqref{GL2}.

We have seen that there is an endpoint  of $G^1_{m_1}$ in a gap $G^2_{n_1}$, and there is an endpoint of  $G^1_{m_1}$ outside  $\conv (C^2)$. This implies that  there is an endpoint $v^2_{n_1}$ of $G^2_{n_1}$ so that $B(v^2_{n_1}) \subseteq G^1_{m_1}$ (in particular $G^1_{m_1}$ and $G^2_{n_1}$ are linked). This is the starting point of the induction.

\textbf{The inductive step}.
Assume that $G^1_{m_i}$ and $G^2_{n_i}$ are linked gaps, where $B(v^2_{n_i}) \subseteq G^1_{m_i}$ and $v^2_{n_i}$ is an endpoint of $G^2_{n_i}$ (the symmetric condition is identical). Let $u^1_{m_i}$ be the endpoint of $G^1_{m_i}$ that is in $G^2_{n_i}$.

Let us see that $B(u^1_{m_i})$ cannot be contained in $G^2_{n_i}$. Otherwise, since $B(u^1_{m_i}) \subseteq G^2_{n_i}$ and $B(v^2_{n_i})\subseteq G^1_{m_i}$, we would have
\[
\tau (C^1) \tau (C^2) \leq \frac{|B(u^1_{m_i})|}{|G^1_{m_i}|} \frac{|B(v^2_{n_i})|}{|G^2_{n_i}|}<1,
\]
which contradicts the thickness assumption \eqref{GL3}.

Then, the other endpoint $u^2_{n_i}$ of $G^2_{n_i}$ belongs to $B(u^1_{m_i})$, so $u^2_{n_i}\in C^2 \cap B(u^1_{m_i})\subseteq (C^1)^C \cap B(u^1_{m_i})$. So, $u^2_{n_i} \in G^1_{m_{i+1}}$ with $m_{i+1}>m_i$.

Then, taking $G^2_{n_{i+1}}:=G^2_{n_i}$ and $G^1_{m_{i+1}}$ as above, we got linked gaps with a bridge of an endpoint of $G^1_{m_{i+1}}$ contained in $G^2_{n_i}$ (because the bridge is contained between $G^1_{m_i}$ and $G^1_{m_{i+1}}$).

\end{proof}

\section{Connection to Hausdorff dimension}

Thickness is a notion of size of a set. Hausdorff dimension is another, more classical, notion of size. They are different, but related. In this section we explore the connections between these concepts.

We begin by recalling the definition of Hausdorff dimension (for a more complete background on Hausdorff dimension see \cite{FalconerFG,falconer_1985,FalconerTechniques}).

\begin{definition}
The $s$-dimensional Hausdorff content of $E$ is \[H_{\infty}^s(E):=\inf \left\{\sum_{k \in \N} \diam(U_k)^s : \ E\subseteq \bigcup_{k \in \N}U_k \text{ and } \{U_k\}_k \text{ is a family of open sets}\right\}.\]

\begin{center}  \includegraphics[width=4cm]{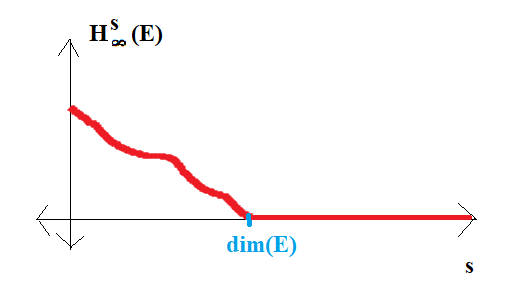} \end{center}

The \textbf{Hausdorff dimension} of $E$ is the supremum of all real-valued s for which the s-dimensional Hausdorff content of $E$ is positive.
\end{definition}

The following result shows that sets of large thickness also have large Hausdorff dimension:
\begin{theorem}\label{dimthickness}
Let $C \subset \R$ be a compact set with thickness $\tau (C)>0$, then
\[
\dim_H(C) \geq \frac{\log(2)}{\log(2+\frac{1}{\tau(C)})}.
\]
In particular, $\dim_H(C)\to 1$ as $\tau(C)\to\infty$.
\end{theorem}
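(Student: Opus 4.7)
The plan is to prove $\dim_H(C)\ge s$ with $s=\log 2/\log(2+1/\tau)$ (writing $\tau=\tau(C)$) by constructing a Frostman-type measure on $C$ via the natural binary gap tree and appealing to the mass distribution principle. First, I organize $C$ into a binary tree $\mathcal{T}$: the root is $\conv(C)$, and each node $I$ has children $L(I),R(I)$ obtained by removing the largest gap $G(I)$ of $C$ contained in $I$. The thickness hypothesis gives $\min(|L(I)|,|R(I)|)\ge\tau\,|G(I)|$ at every node.

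The heart of the argument is the key inequality
\[
|L|^s + |R|^s \;\ge\; |I|^s
\]
at every node (with $|I|=|L|+|G|+|R|$ and $\min(|L|,|R|)\ge\tau|G|$). Setting $t=\min(|L|,|R|)/\max(|L|,|R|)\in(0,1]$ and using $|G|\le\min(|L|,|R|)/\tau$, homogeneity reduces this to showing $1+t^s\ge(1+(1+1/\tau)t)^s$ on $[0,1]$. The exponent $s$ is chosen exactly so that $(2+1/\tau)^s=2$, which forces equality at $t=1$; a one-variable calculus check (boundary values and a single critical-point analysis) then gives the inequality throughout. Using this, I construct a finite Borel measure $\mu$ on $C$ top-down through $\mathcal{T}$: set $\mu(\conv(C))=|\conv(C)|^s$, and at each node split as
\[
\mu(L(I)) \;=\; \mu(I)\,\frac{|L(I)|^s}{|L(I)|^s+|R(I)|^s},
\]
symmetrically for $R(I)$. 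By induction along $\mathcal{T}$, the key inequality immediately yields $\mu(I)\le|I|^s$ for every tree node.

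What remains is to upgrade this tree-node bound to the Frostman estimate $\mu(B(x,r))\le Cr^s$ for balls centered on $C$; this is where the main technical obstacle lies. A given ball $B(x,r)$ is contained in a unique minimal tree node $I^*$; if $|I^*|$ is comparable to $r$ then $\mu(B(x,r))\le\mu(I^*)\le|I^*|^s\lesssim r^s$ directly. Otherwise $B(x,r)$ straddles the gap of $I^*$, and one writes $\mu(B(x,r))=\mu(B(x,r)\cap L(I^*))+\mu(B(x,r)\cap R(I^*))$ and recurses into the two subtrees. The delicate point is to prevent the bounds from compounding uncontrollably across successive straddlings; the antichain reformulation of the key inequality (for any tree-antichain $A$ covering $C$, $\sum_{I\in A}|I|^s\ge|\conv(C)|^s$) together with the fact that each recursive straddling must involve a strictly smaller gap is used to close the estimate. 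Once $\mu(B(x,r))\le Cr^s$ is in hand, the mass distribution principle gives $\dim_H(C)\ge s$, and the asymptotic claim $\dim_H(C)\to 1$ as $\tau(C)\to\infty$ follows immediately since $\log 2/\log(2+1/\tau)\to 1$.
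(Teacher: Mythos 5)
Your key inequality $|L|^s+|R|^s\ge|I|^s$ is correct and is exactly the inequality at the heart of the paper's argument (the paper phrases it as minimizing $x^\beta+y^\beta$ over a region of admissible proportions $x=|L|/|I|$, $y=|R|/|I|$). The gap sits precisely at the step you yourself flag as the main technical obstacle, and it is not merely delicate: the Frostman estimate $\mu(B(x,r))\le c\,r^s$ is \emph{false} for the measure you construct. Take $\tau\ge 1$, $s=\log 2/\log(2+1/\tau)$, and for large $m$ let $C\subseteq[0,1]$ be obtained by removing, from the right end inward, $m$ gaps of length $g=\delta/\tau$ alternating with intervals $R_1,\dots,R_m$ of length $\delta:=m^{-1/s}$, leaving $L_m=[0,1-m(\delta+g)]$ of length at least $1/2$ (every thickness quotient equals $\tau$; perturb the gap lengths slightly if you want the non-increasing ordering to be forced). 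The tree is a caterpillar, your splitting rule gives $\mu(R_k)=\mu(L_{k-1})\,\delta^s/(|L_k|^s+\delta^s)$, and telescoping yields $\sum_{k\le m}\mu(R_k)=1-\prod_{k\le m}\frac{|L_k|^s}{|L_k|^s+\delta^s}\ge 1-(1+\delta^s)^{-m}=1-(1+1/m)^{-m}\ge \frac12$. But all the $R_k$ lie in the interval $U=[1-m(\delta+g),1]$ of length $m\delta(1+1/\tau)=(1+1/\tau)\,m^{-(1-s)/s}\to 0$, so $\mu(U)/|U|^s\gtrsim m^{1-s}\to\infty$. Nesting such configurations at smaller and smaller scales inside a single compact set of thickness $\tau$ produces one $C$ with $\sup_U \mu(U)/|U|^s=\infty$. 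The underlying reason is that $\sum_k|R_k|^s$ can vastly exceed $\bigl(\sum_k|R_k|\bigr)^s$ when $s<1$, so the peeling-off in your recursion compounds exactly as you feared; and the antichain inequality points the wrong way (it \emph{lower}-bounds $\sum_{I\in A}|I|^s$), so it cannot control $\mu(U)$ from above.

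The repair is the paper's route: skip the measure entirely and bound the $s$-dimensional Hausdorff content from below. Reduce to a finite cover by pairwise disjoint open intervals (merging overlapping intervals only decreases $\sum_i|U_i|^s$, by subadditivity of $t\mapsto t^s$), locate the \emph{smallest} gap $G$ not covered by $\bigcup_i U_i$, note that its two adjacent bridges are then each contained in a single cover element $U(L)$, $U(R)$, and apply your key inequality to replace $U(L),U(R)$ by $\conv\bigl(U(L)\cup U(R)\bigr)$ without increasing $\sum_i|U_i|^s$; iterating reduces the cover to a single interval containing $\conv(C)$, giving $\sum_i|U_i|^s\ge|\conv(C)|^s>0$. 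Your key inequality is exactly what is needed there — applied to cover elements rather than to tree nodes.
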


\begin{observation}[Sharpness of Theorem \ref{dimthickness}]
Given $\tau \in (0, \infty)$, we can take $C:=M_{\eps}\subseteq \R$ to be the middle Cantor set with relative length of gaps $\eps:=\eps(\tau):=\frac{1}{2\tau+1}$. Then the relative length of each child in its parent is $\alpha:= \frac{\tau}{1+2\tau}$.
By Example \ref{ex:thicknesscentral} we have $\tau(C)=\frac{1-\eps}{2\eps}=\tau$, and by \cite[Example 4.4]{FalconerFG} (taking $m=2$) $\dim_H(C) = \frac{\log(2)}{\log(\alpha^{-1})}= \frac{\log(2)}{\log(2+\frac{1}{\tau})}$.\end{observation}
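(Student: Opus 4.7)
The plan is to verify the two displayed equalities by a direct substitution into the formula proved in Example \ref{ex:thicknesscentral} together with a standard self-similar dimension computation, and then to compare with the lower bound of Theorem \ref{dimthickness}.

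For the thickness, I would simply plug $\eps=\frac{1}{2\tau+1}$ into $\tau(M_\eps)=\frac{1-\eps}{2\eps}$, which is the output of Example \ref{ex:thicknesscentral}. Since $1-\eps=\frac{2\tau}{2\tau+1}$ and $2\eps=\frac{2}{2\tau+1}$, the quotient collapses to $\tau$. Under the same substitution, the relative scaling factor of each child in its parent is $\lambda=\frac{1-\eps}{2}=\frac{\tau}{2\tau+1}$, which is exactly the $\alpha$ in the statement.

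For the Hausdorff dimension, my plan is to present $M_\eps$ as the attractor of the IFS $\{f_1,f_2\}$ on $[0,1]$ with $f_1(x)=\alpha x$ and $f_2(x)=\alpha x+(1-\alpha)$. Both maps are similarities with common contraction ratio $\alpha$, and their images $[0,\alpha]$ and $[1-\alpha,1]$ are disjoint closed intervals separated by the central gap of length $\eps>0$, so the IFS satisfies the strong separation condition (and in particular the open set condition). By the Moran/Hutchinson formula for self-similar sets under OSC (the content of \cite[Example 4.4]{FalconerFG} with $m=2$), $\dim_H(M_\eps)=s$ where $2\alpha^s=1$, i.e.\ $s=\log 2/\log(\alpha^{-1})$.

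To close, I would simplify $\alpha^{-1}=\frac{2\tau+1}{\tau}=2+\frac{1}{\tau}$ to obtain $\dim_H(M_\eps)=\frac{\log 2}{\log(2+1/\tau)}$, which matches exactly the lower bound in Theorem \ref{dimthickness} applied to a set of thickness $\tau$, thus establishing sharpness across the entire range $\tau\in(0,\infty)$. The only point that warrants any care at all is checking the separation hypothesis so that the Moran formula applies verbatim; since $\eps>0$ gives strong separation, no real obstacle arises.
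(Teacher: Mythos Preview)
Your proposal is correct and follows essentially the same route as the paper: the observation itself is the entire argument there, invoking Example~\ref{ex:thicknesscentral} for the thickness and \cite[Example~4.4]{FalconerFG} for the dimension, and you have simply unpacked those two citations with the explicit substitutions and the OSC check.
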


\begin{observation} Theorem \ref{dimthickness} shows that sets with large thickness have large Hausdorff dimension. The converse does not hold, in fact there are sets of positive Lebesgue measure (which implies full Hausdorff dimension) with thickness $0$: consider for example the union of a closed interval with an isolated point.
\end{observation}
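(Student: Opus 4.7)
The plan is to verify that the concrete example suggested in the statement, $C:=[0,1]\cup\{2\}$, simultaneously has positive Lebesgue measure (hence full Hausdorff dimension) and thickness zero. Both claims reduce to elementary verifications using facts already established earlier in the text.

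For the dimension part, I would first note that $C\supseteq[0,1]$ forces the Lebesgue measure of $C$ to be at least $1$, in particular positive. Since one-dimensional Lebesgue measure on $\R$ agrees (up to a constant) with the Hausdorff measure $\mathcal{H}^1$, we get $\mathcal{H}^1(C)>0$ and so $\dim_H(C)\geq 1$; the reverse inequality is automatic from $C\subseteq\R$, giving $\dim_H(C)=1$. This is the routine "positive measure implies full dimension" step and does not require spelling out.

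For the thickness part, I would directly invoke the second Observation of Section 1, which records that any compact set possessing an isolated point has thickness zero. For our example the calculation is immediate: $\conv(C)=[0,2]$, and the unique bounded component of $[0,2]\setminus C$ is $G_1=(1,2)$, so the ordered gap sequence has the single term $G_1$, with associated remnants $L_1=[0,1]$ and $R_1=\{2\}$. Since $|R_1|=0$, one has $\frac{\min\{|L_1|,|R_1|\}}{|G_1|}=0$, whence $\tau(C)=0$.

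I do not anticipate any substantive obstacle; the content of the observation is genuinely a one-line combination of two facts already on the table. If one wishes to strengthen the example to a totally disconnected set, the same mechanism works for $K\cup\{p\}$ whenever $K$ is a fat Cantor set (positive Lebesgue measure, hence $\dim_H=1$) and $p$ is an isolated point lying outside $\conv(K)$; the isolated point alone forces $\tau=0$ regardless of how thick $K$ itself might otherwise be.
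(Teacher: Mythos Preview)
Your verification is correct and matches the paper's approach exactly: the paper simply states the example without further justification, relying implicitly on the earlier Observation that an isolated point forces $\tau(C)=0$, which is precisely what you invoke. You have merely spelled out the one-line details that the paper leaves to the reader.
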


\begin{proof}[Proof of Theorem \ref{dimthickness}]
We define
\[
\beta:= \frac{\log(2)}{\log(2+\frac{1}{\tau(C)})}.
\]

It is enough to prove that for any $\mathcal{U}:=\{U_i\}_i$ open covering of $C$ we have $\sum_i |U_i|^{\beta} \geq |\conv(C)|^\beta>0$.

Fix  $\mathcal{U}:=\{U_i\}_i$ an open covering of $C$.
Since $C \subseteq \R$ is a compact set, and the sum decreases while dropping elements of the sequence of $\mathcal{U}$, we can assume that the covering $\mathcal{U}$ is formed by finitely many elements.

The case in which $\mathcal{U}$ is formed by an interval $U$ is trivial (because $\conv(C) \subseteq U$ implies $|\conv(C)|^{\beta}\leq |U|^{\beta}$).
Let's assume that we have at least 2 open intervals in $\mathcal{U}$ and reduce inductively this case to the former case.

We define as before $(G_n)_n$ the sequence of gaps in the definition of $\tau (C)$, and $L_n$ and $R_n$ the left and right intervals associated to $G_n$.

Since $\mathcal{U}$ is a covering of $C$ by finitely many open sets, then the convex hull of $C$ except a few gaps are covered by $\mathcal{U}$, i.e. there exist $N \in \N$ and gaps $G_{m_1}, \cdots, G_{m_N}$ with $|G_{m_1}|\geq \cdots \geq |G_{m_N}|$ where \[\conv (C)\setminus \bigcup_{1\leq j\leq N} G_{m_j} \subseteq \bigcup_i U_i \text{ and } G_{m_j} \nsubseteq \bigcup_i U_i.\]
We have $L_{m_N}$ and $R_{m_N}$ the left and right intervals associated $G_{m_N}$ -which is the smallest gap that is not covered-.
Then, there exists $U(L)$ and $U(R)$ in $\mathcal{U}$ so that $L_{m_N} \subseteq U(L)$ and $R_{m_N} \subseteq U(R)$.

We define $A:=\conv (U(L)\cup U(R))$.
Using that $L_{m_N} \subseteq U(L)$, the definition of $\tau(C)$ and the definition of $A$ we get
\begin{equation}\label{eq:condR1}|U(L)|\geq |L_{m_N}| \geq \tau |G_{m_N}| \geq \tau (|A|-|U(L)|-|U(R)|).\end{equation}
Analogously, using that $R_{m_N} \subseteq U(R)$, the definition of $\tau(C)$ and the definition of $A$ we get
\begin{equation}\label{eq:condR2}|U(R)|\geq |R_{m_N}| \geq \tau |G_{m_N}| \geq \tau (|A|-|U(L)|-|U(R)|).\end{equation}

We define $x:=\frac{|U(L)|}{|A|}$ and $y:=\frac{|U(R)|}{|A|}$.
The proportions $x$ and $y$ satisfy:
\begin{itemize}
\item $x\geq 0$ and $y \geq 0$,
\item $x+y \leq 1$ (because $U(L) \cupdot U(R) \subseteq A$),
\item $\tau (1-x-y)\leq x$ and symmetrically $\tau (1-x-y)\leq y$ (because of \eqref{eq:condR1} and \eqref{eq:condR2}).
\end{itemize}

Based on this, we define the region $R$  by
\[
R:=\{(x,y) \in \R^2: \ x\geq 0, \ y\geq 0, \ x+y\leq 1, \ x\geq \tau (1-(x+y)), \ y\geq \tau (1-(x+y)) \}.
\]
The intersection of the lines $x=\tau(1-(x+y))$ and $y=\tau(1-(x+y))$ is the point $P=\left( \frac{1}{2+\frac{1}{\tau}}, \frac{1}{2+\frac{1}{\tau}} \right)$. See Figure \ref{fig:R}.
\begin{figure}
\begin{center}
  \includegraphics[width=0.6\textwidth]{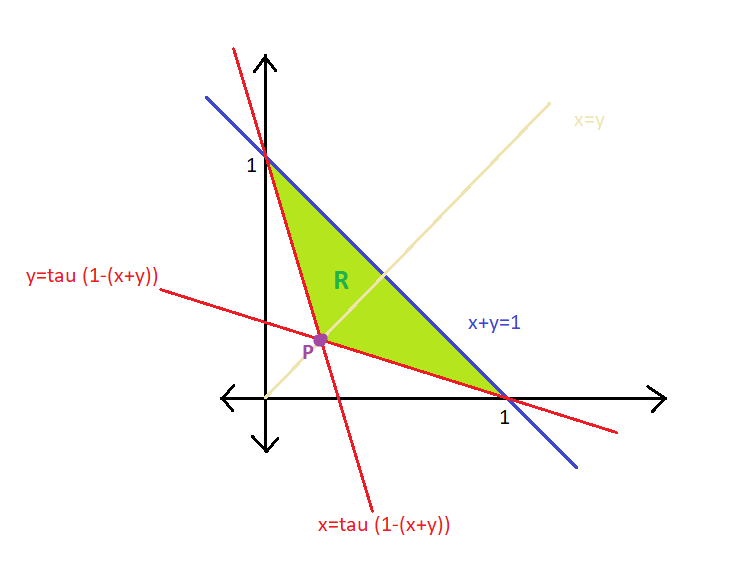}
\end{center}
\caption{The region $R$} \label{fig:R}
\end{figure}

\textbf{Claim:} \[1=\min \{x^{\beta}+y^{\beta}:  \ (x,y)\in R\}.\]

This is a standard calculation, but we provide the details. Let $g(x,y):=x^{\beta}+y^{\beta}$. Observe that since $g$ is increasing in $x$ and $y$, the minimum is reached on the red sides of the boundary of $R$ in Figure \ref{fig:R}. By symmetry, it is enough to study just one red side: we want to get the minimum of $g$ under the condition $y=1-(1+\frac{1}{\tau})x$ with $0 \leq x \leq \frac{1}{2+\frac{1}{\tau}}$. This is, the minimum of $h(x)= x^{\beta}+ \left(1-(1+\frac{1}{\tau})x \right)^\beta$ with $0 \leq x \leq \frac{1}{2+\frac{1}{\tau}}$. A calculation shows that the only critical point of $h'$ is a maximum, so the minimum is attained at some endpoint.





We have $h(0)=1$ and $h \left( \frac{1}{2+\frac{1}{\tau}} \right)=2 \left( \frac{1}{2+\frac{1}{\tau}} \right)^\beta=1$ (where the last equality holds by definition of  $\beta:= \frac{\log(2)}{\log(2+\frac{1}{\tau(C)})}$). This gives the claim.

Applying the claim to $x:=\frac{|U(L)|}{|A|}$ and $y:=\frac{|U(R)|}{|A|}$, we get \[|U(L)|^\beta + |U(R)|^\beta \geq |A|^\beta.\]

This means that changing $U(L)$ and $U(R)$ by $A$ in the covering $\mathcal{U}$ gives a new covering $\mathcal{U'}$ with one less element, by disjoint open sets, with smaller sum $\sum_i |U_i '|^\beta$.
Repeating this process we get that $\sum_i |U_i|^{\beta}\geq \sum_i |U_i '|^\beta \geq \cdots \geq |\conv (C)|^{\beta}$.
\end{proof}


\begin{observation}
Intuitively, thickness looks at the smallest part of the set, while Hausdorff dimension looks at the largest part (for example, if $C=\bigcup_{k \in \N} C_k$, then $\dim_H(C)=\sup_{k\in \N} \dim_H(C_k)$). So it is reasonable to apply Theorem \ref{dimthickness} to compact subsets of $C$. More precisely, we can define
the \emph{upper thickness} of a set $C$ as $\tilde{\tau}(C):=\sup_{A \text{ compact }\subseteq C} \tau(A)$. Then of course Theorem \ref{dimthickness} gives the same bound replacing $\tau(C)$ by $\tilde{\tau}(C)$.

As a simple instance of this, if a set has isolated points then the thickness is $0$, while the upper thickness ``gets rid'' of them and can take any value.

The upper thickness in usually larger than the thickness, but in some cases they may be equal.
\end{observation}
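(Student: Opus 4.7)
The plan is to derive the claim as a direct corollary of Theorem~\ref{dimthickness} combined with two elementary facts: monotonicity of Hausdorff dimension under inclusion, and monotonicity and continuity of the function $\phi(t) := \log(2) / \log(2 + 1/t)$ that appears in the thickness bound. No new geometric idea is needed; the whole content is the passage from thickness to upper thickness.

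First I would fix an arbitrary compact $A \subseteq C$ with $\tau(A) > 0$ and apply Theorem~\ref{dimthickness} to it, obtaining $\dim_H(A) \geq \phi(\tau(A))$. Since any covering of $C$ covers $A$ as well, Hausdorff dimension is monotone under inclusion, so $\dim_H(C) \geq \dim_H(A) \geq \phi(\tau(A))$. Taking the supremum over all such $A$ gives
\[
\dim_H(C) \geq \sup_{A \text{ compact} \subseteq C} \phi(\tau(A)).
\]

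The remaining step is to identify the right-hand side with $\phi(\tilde{\tau}(C))$. A quick check shows that on $(0,\infty)$ the map $t \mapsto 2 + 1/t$ is strictly decreasing, so $\phi$ is strictly increasing and continuous; moreover $\phi(t) \to 1$ as $t \to \infty$. Picking a sequence $A_n$ of compact subsets of $C$ with $\tau(A_n) \to \tilde{\tau}(C)$ and passing to the limit with continuity (and the convention $\phi(\infty) = 1$ when $\tilde{\tau}(C) = \infty$) yields $\sup_A \phi(\tau(A)) = \phi(\tilde{\tau}(C))$, which is the claim.

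The argument presents no real obstacle; the only subtlety worth stating explicitly is that the supremum defining $\tilde{\tau}(C)$ need not be attained, but this is handled cleanly by continuity of $\phi$. The illustrative comment about isolated points then follows immediately: if $C$ contains an isolated point then $\tau(C) = 0$ by the earlier observation, yet any compact $A \subseteq C$ avoiding the isolated points is unaffected, so $\tilde{\tau}(C)$ may be arbitrarily large (e.g.\ if $C$ also contains a middle-$\e$ Cantor set).
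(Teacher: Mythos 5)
Your proposal is correct and is exactly the argument the paper has in mind (the paper states the observation without proof, treating it as an immediate consequence): monotonicity of Hausdorff dimension under inclusion, Theorem~\ref{dimthickness} applied to compact subsets $A\subseteq C$, and the fact that $t\mapsto \log(2)/\log(2+\tfrac1t)$ is increasing and continuous so that the supremum over $A$ passes to $\tilde\tau(C)$. The one subtlety you flag --- that the supremum defining $\tilde\tau(C)$ need not be attained --- is handled correctly by taking a sequence $A_n$ with $\tau(A_n)\to\tilde\tau(C)$.
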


\section{Thickness and patterns in fractals}

In this section we investigate the connection between thickness and patterns in sparse sets.
\begin{definition}

We say that a set $C \subseteq \R^d$ contains a homothetic copy of $P$ if there exist $a \in \R\setminus\{0\}$ and $b \in \R^d$ so that $aP+b \subseteq C$.
\end{definition}

For example, an arithmetic progression of length $N$ in the real line is a homothetic copy of $\{1, \cdots, N\}$.

The following result is well known:
\begin{lemma}Any set $C \subseteq \R^d$ of positive Lebesgue measure contains homothetic copies of every finite set.
\end{lemma}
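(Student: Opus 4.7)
The plan is to use the Lebesgue density theorem to find a point $x_0$ where $C$ is extremely concentrated, and then show that for a sufficiently small scaling factor $a$, the scaled copy $aP$ must fit inside $C$ after some translation $b$ chosen near $x_0$. The key observation is that the set of valid translations is exactly the intersection $\bigcap_{i=1}^N (C - a p_i)$, where $P = \{p_1,\dots,p_N\}$, so it suffices to prove this intersection has positive measure.

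First, by the Lebesgue density theorem applied to $C$, almost every point of $C$ is a density point, so one can pick $x_0 \in C$ with
\[
\lim_{r \to 0^+} \frac{|B(x_0,r) \cap C|}{|B(x_0,r)|} = 1.
\]
Set $M := \max_i \|p_i\|$ and fix a density threshold $\eta < (N 2^d)^{-1}$. By the density property there is $r_0 > 0$ such that $|B(x_0,r) \setminus C| < \eta\,|B(x_0,r)|$ for all $r \le r_0$.

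Next, choose $a > 0$ small enough that $aM < r_0/2$ and write $B := B(x_0, r_0/2)$. For each $i$, the set of bad translations $b \in B$ with $a p_i + b \notin C$ has measure
\[
|B \setminus (C - a p_i)| \;=\; |(B + a p_i) \setminus C| \;\le\; |B(x_0, r_0) \setminus C| \;<\; \eta\, 2^d\, |B|,
\]
because $B + a p_i \subseteq B(x_0, r_0)$ by the choice of $a$. Summing over $i = 1,\dots,N$ gives
\[
\Bigl|B \setminus \bigcap_{i=1}^N (C - a p_i)\Bigr| \;\le\; N \eta\, 2^d\, |B| \;<\; |B|,
\]
so $\bigcap_{i=1}^N (C - a p_i)$ meets $B$ in a set of positive measure. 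Any $b$ in this intersection yields $aP + b \subseteq C$, which is the desired homothetic copy of $P$.

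The only mildly delicate point is coordinating the scales: one must choose $\eta$ (hence $r_0$) before $a$, so that after shrinking $a$ the shifted balls $B + a p_i$ remain inside the ball where the density estimate applies. Everything else is straightforward measure arithmetic, and the factor $2^d$ is absorbed harmlessly into the choice of $\eta$.
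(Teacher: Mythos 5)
Your proof is correct and follows essentially the same strategy as the paper: pick a Lebesgue density point, translate by the $N$ scaled pattern points, and use a union bound to show the intersection $\bigcap_i (C - ap_i)$ has positive measure near that point. The only difference is bookkeeping — you shrink the ball by half so all translates stay where the density estimate applies (absorbing a factor $2^d$), whereas the paper keeps the unit cube fixed and bounds the measure lost when a translated set is intersected back with it; both handle the same boundary issue.
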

\begin{proof}

Let $P=\{a_1, \cdots, a_N\}$ be a finite set, and $R:=\max_{1\leq i \leq N}\| a_i\|$.

We know by the Lebesgue Density Theorem that almost every point $x \in C$ satisfies \[\lim_{r \to 0}\frac{\mathcal{L}^d(C \cap Q_r(x))}{\mathcal{L}^d(Q_r)}=1\] where $Q_r(x)$ is the cube with center $x$ and radius $r$. Fix an $x$ satisfying this. Then, there exists $r_0$ so that $\frac{\mathcal{L}^d(C \cap Q_{r_0}(x))}{\mathcal{L}^d(Q_{r_0})}>1-\frac{1}{10N}$.

Rescaling and translating the set $C$ and the cube $Q_{r_0}(x)$, we can assume $Q_{r_0}(x)=[0,1]^d$. Then, we know that $\mathcal{L}^d(C \cap [0,1]^d)>1-\frac{1}{10N}$.

It is enough to prove that
\[
\mathcal{L}^d \left( \bigcap_{1\leq i\leq N}(C-\frac{a_i}{10RdN}) \right)>0.
\]
Then, in particular, $\bigcap_{1\leq i\leq N}(C-\frac{a_i}{10RdN})$ is nonempty, and any point $y$ in the intersection satisfies that
\[
y+ (10RdN)^{-1} P \subset  C.
\]

Note that if $B \subseteq [0,1]^d$ and $\|v\|_2 \leq r$, then $\mathcal{L}^d\left((B-v) \cap [0,1]^d\right)\geq \mathcal{L}^d(B)-dr$.
Applying this to $B:=C\cap [0,1]^d$ and $v_i:=\frac{a_i}{10RdN}$ whose norm is smaller than $r:=\frac{1}{10dN}$, we get
\begin{align*}\mathcal{L}^d\left( (C \cap [0,1]^d - \frac{a_i}{10RdN}) \cap [0,1]^d \right) &\geq \mathcal{L}^d(C \cap [0,1]^d)-\frac{1}{10N}\\&\geq 1-\frac{2}{10N}.\end{align*}

If $A_1$, $\cdots$, $A_N \subseteq [0,1]^d$ satisfy $\mathcal{L}^d(A_i)\geq 1-\eps_i$ for all $i$, then
\[
\mathcal{L}^d \left( \bigcap_{1\leq i \leq N}A_i \right)\geq 1-\sum_{1\leq i \leq N} \eps_i.
\]

We apply this to $A_i:=(C \cap [0,1]^d - \frac{a_i}{10RdN}) \cap [0,1]^d$ and $\eps_i=\frac{2}{10N}$, to  get
\begin{align*}\mathcal{L}^d\left(\bigcap_{1\leq i\leq N}(C-\frac{a_i}{10RdN})\right)&\geq \mathcal{L}^d \left(\bigcap_{1\leq i\leq N} A_i \right)\\
&\geq 1-\sum_{1\leq i \leq N} \frac{2}{10N}= 1-\frac{1}{5}>0.\end{align*}
\end{proof}

Since positive Lebesgue measure guarantees homothetic copies of every finite set, it is natural to ask whether a weaker notion of size guarantees copies too. A natural notion of size to consider is Hausdorff dimension. However, Keleti \cite{Kel99} proved that there exists a compact set $C \subseteq \R$, with full Hausdorff dimension $1$, that does not contain any arithmetic progression of length $3$.  Afterwards, Keleti \cite{Kel08} improved this by constructing full Hausdorff dimensional compact sets in the real line avoiding homothetic copies of triplets in any given countable collection. Maga \cite{Maga} generalized this result to the complex plane. Máthé \cite{Mathe} constructed large Hausdorff dimensional compact sets avoiding polynomial patterns, in particular he generalized Keleti's result to countably many linear patterns. Finally, Yavicoli \cite{YavLinear} studied what happens ``in between'' positive Lebesgue measure and Hausdorff dimension $1$, by considering a more general notion of Hausdorff measures.

These facts indicate that Hausdorff measures and Hausdorff dimension cannot, by themselves, detect the presence or absence of patterns in sets of Lebesgue measure zero, even in the most basic case of arithmetic progressions. So, it is natural to seek a different notion of size that is able to detect patterns in sets of zero Lebesgue measure.

One of the ideas behind Keleti's construction is avoiding the given proportion everywhere at some scales of the construction. See Figure \ref{fig:Keleti}.
This picture happens on a ``zero density'' set of scales. So, the Hausdorff dimension can still be large (at ``almost all'' scales the set is large).
The notion of thickness is useful to avoid such examples: even one scale that looks like Figure \ref{fig:Keleti} this makes the thickness small.

\begin{figure}
\begin{center}  \includegraphics[height=3cm]{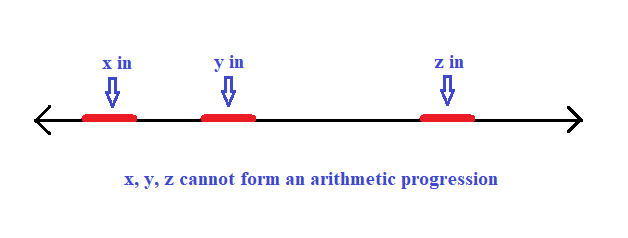}\end{center}
\caption{One step in Keleti's construction of a compact set that avoids progressions.} \label{fig:Keleti}
\end{figure}

Before studying patterns in relationship with thick sets, let me mention that Hausdorff dimension can be useful to detect some non-linear patterns (see \cite{orponen}), or to detect arithmetic progressions of length $3$ assuming additional Fourier decay hypotheses, which are often not explicit or hard to check (see \cite{CLP,HLP}). This suggests that it is natural to try to find explicit checkable conditions on a compact set that ensures that it contains arithmetic progressions, as well as other patterns.

Before studying arithmetic progressions, let us consider a different pattern -distances- using Newhouse's thickness and the Gap Lemma. We define the set of distances of a set $C \subseteq \R$ as \[\Delta (C):=\{|y-x|: \ x,y \in C \}.\]

\begin{lemma}
Let $C \subseteq \R$ be a compact set with $\conv(C)=[0,1]$ and $\tau(C)\geq 1$. Then, $\Delta (C)=[0,1]$.
\end{lemma}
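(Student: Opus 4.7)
The plan is to fix an arbitrary $t\in[0,1]$ and to produce $x,y\in C$ with $y-x=t$ by invoking the Gap Lemma (Theorem \ref{NewhouseGapLemma}) applied to $C$ and its translate $C':=C+t$. The observation is that $C\cap C'\neq\emptyset$ means there exists $x\in C$ with $x\in C+t$, i.e.\ $x-t\in C$, and then $t=x-(x-t)\in \Delta(C)$. Since the trivial cases $t=0$ and $t=1$ are immediate ($0,1\in C$ because they are endpoints of $\conv(C)=[0,1]$), I may assume $t\in(0,1)$.

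I would first dispatch the easy hypotheses. Hypothesis (\ref{GL1}): $\conv(C)=[0,1]$ and $\conv(C')=[t,1+t]$, so their intersection is $[t,1]\neq\emptyset$. Hypothesis (\ref{GL3}): thickness is translation invariant, so $\tau(C')=\tau(C)\geq 1$, hence $\tau(C)\,\tau(C')\geq 1$.

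The part requiring a touch of care is hypothesis (\ref{GL2}), that neither set lies in a gap of the other. A gap of $C$ is an open interval contained in $\conv(C)=[0,1]$, while $C'\supseteq\{t,1+t\}$ (since $0,1\in C$). Because $1+t>1$, the point $1+t\in C'$ is not contained in $[0,1]$, so $C'$ cannot lie in any gap of $C$. Symmetrically, a gap of $C'$ is contained in $[t,1+t]$, while $0\in C$ and $0<t$, so $C$ is not contained in any gap of $C'$.

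All three hypotheses being verified, the Gap Lemma yields $C\cap C'\neq\emptyset$ and hence $t\in\Delta(C)$. Combining with $\Delta(C)\subseteq[0,1]$ (which holds because $C\subseteq\conv(C)=[0,1]$) gives $\Delta(C)=[0,1]$. I do not anticipate any serious obstacle: the only subtlety is checking that hypothesis (\ref{GL2}) holds for every $t\in(0,1)$, and this follows cleanly from the fact that the convex hull of each set sticks out of the convex hull of the other on one side.
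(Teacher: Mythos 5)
Your proposal is correct and follows essentially the same route as the paper: translate $C$ by $t$ (the paper uses $C-t$, you use $C+t$, which is immaterial) and apply the Gap Lemma, with the hypotheses checked exactly as you do. Your verification of hypothesis (\ref{GL2}) via the endpoints $0$ and $1+t$ sticking out of the other set's convex hull is in fact slightly more explicit than the paper's.
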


\begin{proof}
We know that $\Delta (C) \subseteq [0,1]$, $0 \in \Delta (C)$ because $C\neq \emptyset$, and $1 \in \Delta (C)$ because $0,1\in C$.
It remains to see that any $t \in (0,1)$ belongs to $\Delta (C)$.

The sets $C$ and $C-t$ satisfy the hypotheses of the Gap Lemma: since $\conv (C)=[0,1]$ and $\conv (C-t)=[-t, 1-t]$, the convex hulls of $C$ and $C-t$ intersect and each set cannot be contained in a gap of the other set. Finally, using $\tau(C)\geq 1$ and the invariance of thickness under translations, we get $\tau(C)\tau(C-t)=\tau(C)^2\geq 1$.
By the Gap Lemma, there is $x \in C \cap (C-t)$, so $t= (x+t)-x \in C-C$ (which means that $t \in \Delta (C)$ because $t>0$).
\end{proof}

Now, we are going to see that, unlike Hausdorff dimension, set of large thickness contain $3$-term progressions:
\begin{proposition}
Let $C \subseteq \R$ be a compact set with $\tau (C)\geq 1$. Then, $C$ contains an arithmetic progression of length $3$.
\end{proposition}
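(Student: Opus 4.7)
After rescaling one can assume $\conv(C) = [0,1]$; this is harmless since thickness and the existence of 3-APs are preserved by homothety. Let $(a,b)$ be a gap of maximal length $d := b - a$ (if $d = 0$ then $C = [0,1]$ and the claim is trivial). Applying $\tau(C) \geq 1$ to this largest gap gives $a \geq d$ and $1 - b \geq d$. The boundary cases $a = d$ or $1 - b = d$ give the 3-APs $\{0, a, b\}$ or $\{a, b, 1\}$ at once, so I would assume $a > d$ and $1 - b > d$.

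The strategy is to look for a 3-AP with middle term $a$: points $x < a < y$ in $C$ with $x + y = 2a$. Since $C \cap (a,b) = \emptyset$, any such $y$ must lie in $C \cap [b,1]$ and the corresponding $x = 2a - y$ in $C \cap [0, a - d]$. I would set
\[
A := C \cap [0, a], \qquad B := 2a - \bigl(C \cap [b, 1]\bigr),
\]
and apply Newhouse's Gap Lemma to $A$ and $B$. Note that $\conv(A) = [0, a]$ while $\conv(B) = [2a - 1, a - d]$: their intersection $[\max(0, 2a - 1),\, a - d]$ is a nonempty interval that does \emph{not} contain $a$, so any point $x \in A \cap B$ automatically produces a nontrivial 3-AP $(x, a, 2a - x)$ in $C$.

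The main work is verifying the Gap Lemma hypotheses. The gap-containment condition is easy: every gap of $A$ or $B$ has length $\leq d$, while $\conv(A)$ has length $a > d$ and $\conv(B)$ has length $1 - b > d$. For the thickness inequality I would prove the stronger claim $\tau(A) \geq \tau(C) \geq 1$ (symmetrically for $B$) by showing that for every gap $G$ of $C$ contained in $(0, a)$ the bridges $L(G), R(G)$ computed inside $A$ coincide with those computed inside $C$. When $|G| < d$, the largest gap $(a, b)$ strictly dominates $G$, so the right bridge of $G$ in $C$ already terminates at $a$. When $|G| = d$, one invokes the order-independence of thickness: placing $(a,b)$ first and $G$ second in the ordering, the right bridge of $G$ is $[r(G), a]$, whose length must be $\geq d$, forcing $r(G) \leq a - d$; the same bridge is then visible inside $A$.

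The main obstacle is this last point, handling the edge case of several gaps of maximal length $d$ lying inside $(0, a)$: one must show that $\tau(A) \geq 1$ in spite of losing the ``extra room'' that $(a,b)$ provided to the right. Once $\tau(A), \tau(B) \geq 1$ is in place, the Gap Lemma produces $x \in A \cap B$, and because $x \leq a - d < a$ and $y := 2a - x \geq b > a$ both lie in $C$, the triple $(x, a, y)$ is the desired arithmetic progression of length three.
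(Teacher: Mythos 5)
Your proof is correct, and it rests on the same decomposition and the same key lemma as the paper's -- split $C$ at a longest gap $(a,b)$ and feed the two flanking pieces into Newhouse's Gap Lemma -- but the execution is genuinely leaner. The paper takes the \emph{right} endpoint of the gap as the middle term of the progression, first proves the full sumset identity $\bigl(C\cap[0,a_1]\bigr)+\bigl(C\cap[a_2,1]\bigr)=[a_2,1+a_1]$ by applying the Gap Lemma to the one-parameter family $-A$, $B-t$ (which forces the normalization $a_1\le 1-a_2$ so that the two ranges of admissible $t$ glue into one interval), and then verifies $2a_2\le 1+a_1$ by a planar region argument. You instead take the \emph{left} endpoint $a$ as the middle term and apply the Gap Lemma exactly once, to $A=C\cap[0,a]$ and $B=2a-\bigl(C\cap[b,1]\bigr)$; the convex hulls then intersect automatically (the inequality $2a-1\le a-d$ is just $b\le 1$, and $0\le a-d$ is the thickness bound on the left bridge), so no reflection/WLOG step and no sumset claim are needed, and any point of $A\cap B$ immediately gives a nondegenerate progression because $\conv(A)\cap\conv(B)\subseteq[0,a-d]$ misses $a$. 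The one ingredient both arguments must justify is $\tau(C\cap[0,a])\ge\tau(C)$: the paper asserts it in one line, while you correctly reduce it, via order-independence of thickness, to an ordering in which $(a,b)$ is removed first, after which the bridges of every gap inside $(0,a)$ are computed identically in $C$ and in $A$. (Minor imprecision: for $|G|<d$ the right bridge of $G$ in $C$ need not literally terminate at $a$ -- it terminates at the nearest previously removed gap inside $(0,a)$, or at $a$ if there is none -- but either way it coincides with the bridge computed in $A$, which is all you use.) Net effect: your single application of the Gap Lemma trades the paper's stronger intermediate conclusion (that $A+B$ is a full interval) for a shorter, case-free route to the $3$-term progression.
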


\begin{proof}
Since the thickness of the compact set and arithmetic progression are invariant under homothetic functions, we can assume that $\conv(C)=[0,1]$.

This is the idea: if we prove that $C \cap \frac{C+C}{2}\neq \emptyset$, then there are $a,b,c \in C$ so that $c=\frac{a+b}{2}$.
Then, $\{a, \frac{a+b}{2}, b\} \subseteq C$. The problem is that a priori we could have $a=b$.  To avoid this issue we are going to consider two disjoint subsets of $C$ ($A$ and $B$ below), and prove that $C \cap \frac{A+B}{2} \neq \emptyset$.

Let $G:=(a_1, a_2)$ be the longest gap of $C$. Since $\tau(C)>0$, $0<a_1<a_2<1$.
The set $[0,1]\setminus G$ consists of two intervals, $L=[0,a_1]$ and $R=[a_2,1]$. We can assume that \[a_1\leq 1-a_2,\] otherwise, we would work with $-C+2$ instead.

Let $A:=C \cap [0,a_1]$ and $B:=C \cap [a_2,1]$. Since we want to show $C \cap \frac{A+B}{2} \neq \emptyset$, we want to understand $A+B$.

\textbf{Claim: $A+B=[a_2, 1+a_1]$}

Clearly, $A+B \subseteq [0,a_1]+[a_2,1]=[a_2,1+a_1]$, so we need to see the other inclusion. We have that
\[|\conv(-A)|=|\conv(A)|=a_1\] and
\[\tau(-A)=\tau(A)=\tau(C\cap [0,a_1])\geq \tau(C)\geq 1.\]
Observe that $\tau(C\cap [0,a_1])\geq \tau(C)$ holds because $G_1$ is the largest gap (otherwise this may not be true, since in general thickness is not well behaved with respect to intersections).

Analogously with $B-t$,
\[|\conv(B-t)|=1-a_2 \text{ and } \tau(B-t)\geq 1.\]

We are going to apply the Gap Lemma to $-A$ and $B-t$ for any $t \in [a_2, 1+a_1]$. Let us see that the assumptions are satisfied.

Note that
\begin{align*}
&\conv(-A)\cap\conv(B-t)\neq \emptyset\\
&\Leftrightarrow \exists \text{ an endpoint of } -A \text{ between the endpoints of } B-t \\
&\Leftrightarrow -a_1 \in [a_2-t,1-t] \text{ or } 0\in [a_2-t,1-t]\\
&\Leftrightarrow t\in [a_1+a_2, 1+a_1] \text{ or } t\in[a_2,1]\\
&\Leftrightarrow t\in [a_2,1+a_1],
\end{align*}
where the last equivalence holds since we assume $a_1+a_2 \leq 1$.

Then:
\begin{itemize}
\item $-A$ is not contained in a gap of $B-t$.
This is true, because since $\tau(C)\geq 1$, $|\conv(-A)|\geq |G|\geq |\text{any gap of }B-t|$. Analogously $B-t$ is not contained in a gap of $A$.
\item  $\conv(-A)\cap\conv(B-t)\neq \emptyset$ since we are considering values of $t \in [a_2, 1+a_1]$.
\item $\tau(-A)\tau(B-t)\geq 1$.
\end{itemize}

Then, the Gap Lemma yields that for all $t\in [a_2,1+a_1]$ there is $x_t\in (-A)\cap (B-t)$, and then $t =(-x_t)+(x_t+t)\in A+B$, giving the claim.

\textbf{Claim: $C \cap \frac{A+B}{2} \neq \emptyset$}

We are going to prove  that in fact $a_2 \in \frac{A+B}{2}$ (note that $a_2 \in C$).

Since $A+B=[a_2, 1+a_1]$, we know that
\[a_2 \in \frac{A+B}{2} \Leftrightarrow a_2 \leq \frac{1+a_1}{2}.\]

Where are the pairs $(a_1,a_2)$ that we are working with?
\begin{itemize}
\item We have $a_1=|\conv(A)|\geq |G|=a_2-a_1$, so that $2a_1 \geq a_2$.
\item We are assuming that $a_1 \leq 1-a_2$.
\item $0<a_1<a_2<1$
\end{itemize}

\begin{figure}
\begin{center}
  \includegraphics[width=0.6\textwidth]{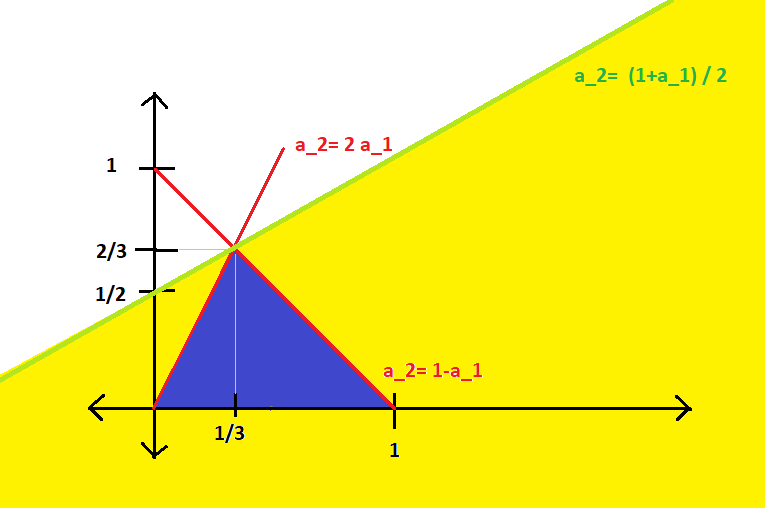}
\end{center}
\caption{The region of possible pairs $(a_1,a_2)$.} \label{fig:region123}
\end{figure}

As we can see in Figure \ref{fig:region123}, the blue region of pairs $(a_1,a_2)$ is contained in the yellow region $\{ a_2 \leq \frac{1+a_1}{2}\}$. In particular, we must have $a_2 \leq \frac{1+a_1}{2}$, so $a_2 \in \frac{A+B}{2}$, as claimed.

We have seen that $C$ contains an arithmetic progression $\{ a, (a+b)/2,b\}$. This is indeed a non-degenerate progression since $a\in A, b\in B$ and $A$ and $B$ are disjoint, so the proof is complete.
\end{proof}


What about longer arithmetic progressions? For example, what is the length of the longest arithmetic progression that is contained in the middle-$\eps$ Cantor set?

\begin{lemma} The middle-$\eps$ Cantor set $M_{\eps}$ does not contain an arithmetic progression of length $\lfloor \frac{1}{\eps}\rfloor +2$.\end{lemma}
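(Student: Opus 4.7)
The plan is to exploit the self-similar structure of $M_\varepsilon$. Recall that at step $m$ of the construction there are $2^m$ closed intervals of length $\lambda^m$, where $\lambda=(1-\varepsilon)/2$, and that inside each such step-$m$ interval there is a central gap of length $\lambda^m\varepsilon$ separating its two step-$(m+1)$ subintervals. Suppose for contradiction that $M_\varepsilon$ contains an arithmetic progression $a, a+d, \ldots, a+(N-1)d$ with $N=\lfloor 1/\varepsilon\rfloor+2$ and $d>0$.

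Next I would pick the right scale: let $m$ be the largest integer such that all $N$ terms of the progression lie in a common step-$m$ interval $I$. Such an $m$ exists, since $m=0$ works (take $I=[0,1]$), and $m$ is bounded above because $\lambda^m\to 0$ while the progression has fixed positive diameter $(N-1)d$. By the maximality of $m$, the $N$ terms do not all lie in a single step-$(m+1)$ subinterval of $I$, and since these two subintervals are separated by a central gap $G\subset I$ of length $\lambda^m\varepsilon$, the progression must straddle $G$: some initial segment of its terms lies to the left of $G$ and the remaining terms to the right.

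From this I would read off two inequalities. The two consecutive terms of the progression sitting on opposite sides of $G$ must differ by at least $|G|=\lambda^m\varepsilon$, so $d\geq\lambda^m\varepsilon$. On the other hand, all $N$ terms lie in $I$, whose length is $\lambda^m$, so $(N-1)d\leq\lambda^m$. Combining gives $(N-1)\lambda^m\varepsilon\leq\lambda^m$, i.e.\ $N-1\leq 1/\varepsilon$. Since $N$ is an integer this forces $N\leq\lfloor 1/\varepsilon\rfloor+1$, contradicting $N=\lfloor 1/\varepsilon\rfloor+2$.

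The argument presents no serious obstacle; the only delicate point is the choice of the scale $m$, which has to be the finest resolution at which the progression still fits in a single construction interval. That particular choice is what forces the gap crossing and thereby makes both inequalities simultaneously available: a coarser scale would not force the crossing, and a finer one would violate confinement.
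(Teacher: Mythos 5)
Your proof is correct and follows essentially the same route as the paper: choose the finest construction scale at which the progression fits in a single interval, so that it must straddle the central gap, and combine $d\geq\lambda^m\varepsilon$ with $(N-1)d\leq\lambda^m$ to get $N-1\leq 1/\varepsilon$. Your justification that the maximal scale $m$ exists (via $\lambda^m\to 0$ versus the fixed positive diameter of the progression) is a small point the paper attributes to self-similarity without elaboration, but the arguments are the same.
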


\begin{proof}
Take $\lambda:=\frac{1-\eps}{2}$. The length of any interval of step $k$ of the construction is $\lambda^k$ (for $k\geq0$), and the length of any gap of the step $k$ of the construction is $\lambda^{k-1}\eps$ (for $k\geq 1$).
Assume that there is an arithmetic progression $a+\Delta j$, $j=1,\ldots, N$ (with $\Delta>0$) contained in $M_{\eps}$. Then, by self-similarity, there is a first step $k \in \N\cup\{0\}$ so that the arithmetic progression is contained in an interval of step $k$ but splits in the step $k+1$.
Hence, $|(a+\Delta N)-(a+\Delta)|=\Delta (N-1)\leq \lambda^{k}$ and $\Delta \geq \lambda^k \eps$.
Therefore, $\lambda^k \geq \lambda^k \eps (N-1)$, so $\frac{1}{\eps}+1\geq N$.
\end{proof}

Getting lower bounds for the length of the longest arithmetic progression in $M_{\eps}$ is more difficult. Broderick, Fishman and Simmons \cite{BFS} proved the following result:
\begin{theorem}[Broderick, Fishman, Simmons] \label{thm:BFS}
For $\eps>0$ sufficiently small, the $M_{\eps}$ middle-$\eps$ Cantor set contains an arithmethic progression of length $c \frac{\frac{1}{\eps}}{\log(\frac{1}{\eps})}$, where $c$ is a very small constant.
\end{theorem}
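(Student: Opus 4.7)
The plan is to construct an arithmetic progression of length $N\sim (1/\eps)/\log(1/\eps)$ inside $M_{\eps}$ by exhibiting $\Delta>0$ and $a\in\R$ with $a+j\Delta\in M_{\eps}$ for all $0\le j\le N-1$; equivalently, by showing $\bigcap_{j=0}^{N-1}(M_{\eps}-j\Delta)\neq\emptyset$ for an appropriate $\Delta$. The starting observation is that $\tau(M_{\eps})=(1-\eps)/(2\eps)\sim 1/(2\eps)$ is very large when $\eps$ is small, so \emph{pairwise} intersections of translates are automatic via the Gap Lemma (Theorem \ref{NewhouseGapLemma}); the real issue is to force $N$ translates to share a common point.

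A naive approach is to iterate the Gap Lemma: set $E_0:=M_{\eps}$ and $E_j:=E_{j-1}\cap (M_{\eps}-j\Delta)$, and apply the Gap Lemma at each step, provided $\tau(E_{j-1})\cdot \tau(M_{\eps})\ge 1$ and the convex-hull/gap hypotheses can be verified. This reduces the problem to tracking the degradation of $\tau(E_j)$ with $j$. A more structural approach -- closer in spirit to the Broderick--Fishman--Simmons argument -- exploits the self-similarity of $M_{\eps}$: choose $\Delta$ commensurable with some scale $\lambda^k$ (where $\lambda=(1-\eps)/2$) so that each shift $M_{\eps}-j\Delta$ interacts with $M_{\eps}$ at only a controlled number of Cantor scales. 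At level $k$, the existence of the desired AP then reduces to a pigeonhole/counting problem: among the $2^k$ level-$k$ intervals of $M_{\eps}$, identify one whose translates by every $j\Delta$ still meet $M_{\eps}$; self-similarity refines that interval to an honest point $a\in M_{\eps}$.

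The main obstacle is the quantitative control of either the thickness drop or the count of surviving intervals. Thickness is notoriously fragile under intersections, so the argument must simultaneously use the self-similar structure of $M_{\eps}$ and the arithmetic regularity of the shifts $\{j\Delta\}_{j=0}^{N-1}$. Without the arithmetic structure one easily loses a full factor of $1/\eps$ and ends up with an AP of length only $\sim 1/\eps$ (matching the trivial heuristic of using $\tau(M_{\eps})\cdot \tau(M_{\eps})\ge 1$ naively $N$ times). The $\log(1/\eps)$ saving in the statement should appear as the number of distinct Cantor scales at which each individual shift creates a genuinely new obstruction: since each $j\Delta$ only perturbs the gap structure of $M_{\eps}$ at $O(\log(1/\eps))$ scales, the accumulated obstructions from $N$ shifts still fit inside the budget provided by the initial thickness, as long as $N\le c(1/\eps)/\log(1/\eps)$.
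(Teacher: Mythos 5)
Your proposal is a plan rather than a proof: neither of the two routes you sketch is carried out, and the one you develop furthest is precisely the route that is known to fail. Iterating the Gap Lemma by setting $E_j := E_{j-1}\cap (M_{\eps}-j\Delta)$ requires a lower bound on $\tau(E_{j-1})$, but thickness is not merely ``fragile'' under intersections --- the intersection of two sets of large thickness can be a single point or have thickness zero, and there is no general mechanism for bounding $\tau(E_j)$ from below, even using the arithmetic structure of the shifts. This is exactly why the actual argument of Broderick, Fishman and Simmons abandons the Gap Lemma entirely: the Gap Lemma does not generalize in any simple way to intersections of three or more sets. Your second route (pigeonholing among the $2^k$ level-$k$ intervals) also has a fatal gap at the step ``self-similarity refines that interval to an honest point $a\in M_{\eps}$'': knowing that $I+j\Delta$ meets $M_{\eps}$ for every $j$ and a single level-$k$ interval $I$ does not produce a common point $a\in I$ with $a+j\Delta\in M_{\eps}$ for all $j$ simultaneously; one must propagate the intersection coherently through all scales at once, and no mechanism for doing so is given. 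Likewise, the assertion that each shift $j\Delta$ ``perturbs the gap structure at $O(\log(1/\eps))$ scales'' is stated without justification and is not where the logarithm actually comes from.

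The proof that the survey points to runs through the Potential Game (a Schmidt-type game). The key ingredients are: (a) a set of thickness $\tau$ with convex hull $[0,1]$, completed by the two unbounded rays, is $(\frac{1}{\tau\beta},\beta,0,\frac{\beta}{2})$-winning (Proposition~\ref{TauWinning}); (b) winning sets are closed under countable intersections with explicit parameter loss (Lemma~\ref{Countable intersection property}), which handles all $N$ translates simultaneously --- this is the step that replaces your untenable iteration of the Gap Lemma; and (c) a quantitative theorem asserting that a winning set with suitable parameters has positive Hausdorff dimension inside a fixed interval, hence is nonempty. The $\log(1/\eps)$ loss arises from the parameter bookkeeping in step (c) --- the condition $n\alpha^c\le \frac{1}{720^2}(1-\beta^{1-c})$ with $c=1-\frac{1}{\log(\alpha^{-1})}$ and $\alpha\sim \eps$ --- not from counting scales perturbed by individual shifts. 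To make your write-up into a proof you would need to supply this game-theoretic machinery (or a genuine substitute for it), since the quantitative heart of the theorem lives entirely in steps (a)--(c).
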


By the two previous results, we know that for $\eps>0$ sufficiently small,
\[\frac{\frac{1}{\eps}}{\log(\frac{1}{\eps})} \lesssim \text{ longest AP contained in }M_{\eps} \lesssim \frac{1}{\eps}.\]
The precise asymptotic behaviour remains an open problem.

We won't give a proof of Theorem \ref{thm:BFS}. The very rough idea behind the proof is:
\begin{align*}
&C \text{ contains an arithmetic progression of length } n \text{ and gap length }\lambda \\
&\Leftrightarrow \text{there is } x \text{ so that } x+k\lambda \in C \ \forall 1\leq k\leq n\\
&\Leftrightarrow \bigcap_{1 \leq k \leq n} (C-k \lambda)\neq \emptyset
\end{align*}
Then, the existence of arithmetic progressions of length $n$ is reduced to proving that intersections of certain $n$ sets is nonempty. Unfortunately, the Gap Lemma does not generalize in any simple way to intersections of $3$ or more sets, and for this reason the authors use a different approach: the Potential Game, which is a game of Schmidt type.

The classical Schmidt game was defined in 1966 by Wolfgang Schmidt to study badly approximable numbers, and since then many variants of the original game have been developed, mainly  to study problems in diophantine approximation.

As a general idea, the Potential Game is a game in which there are certain rules and two players: Bob who decides where we are going to \textbf{zoom-in}, and Alice who decides what to \textbf{erase} there. Bob has limits on how far to zoom in, and Alice has limits on how much to erase. And there are special sets called \textbf{winning sets} which are subsets of the ``board game''. A set $W$ is winning if Alice has a strategy guaranteeing that if she did not erase the limit point of convergence for Bob's moves during the game, then that point belongs to $W$.
Being a winning set (for certain parameters) can be considered as another notion of ``large size'' for the set.

Broderick, Fishman and Simmons showed that a slight modification of a middle-$\eps$ Cantor set is a winning set with certain parameters. Then they use that intersections of winning sets are winning (for certain other parameters), and prove and use a result that gives a (positive) lower bound for the Hausdorff dimension of a winning set inside certain balls. In particular, the intersection is nonempty.

As a remark, winning sets for the classical Schmidt's game and many variants have full Hausdorff dimension. This is not the case for the potential game (with fixed parameters). This makes it useful to study fractal sets that do not have full Hausdorff dimension.

We will define now the Potential Game in a restricted context (on the real line, where Alice is able to erase neighborhoods of points; the game can be extended to higher dimensions and more general sets).
\begin{definition}[Potential game in $\R$]
Given $\alpha, \beta, \rho >0$ and $c \geq 0$, Alice and Bob play the $(\alpha, \beta, c, \rho)$-Potential Game in $\R^d$ under the following rules:
\begin{itemize}
\item For each $m \in \N_{0}$ Bob plays first, and then Alice plays.
\item On the $m$-th turn, Bob plays a closed ball $B_m:=B[x_m , \rho_m ]$. The first ball must satisfy $\rho_0 \geq \rho$. The following moves must satisfy $\rho_{m}\geq \beta \rho_{m-1}$ and $B_m \subseteq B_{m-1}$ for every $m \in \N$.
\item On the $m$-th turn Alice responds by choosing and erasing a finite or countably infinite collection $\mathcal{A}_m=\{A_{\rho_{i,m}}\}_i$ of balls with radii $\rho_{i,m}>0$. Alice's collection must satisfy:
\begin{align*}
\sum_{i} \rho_{i,m}^c \leq (\alpha \rho_m )^c & \text{ if } c>0 \\
\rho_{1,m} \leq \alpha \rho_m & \text{ if } c=0 \text{ (in this case Alice can erase just one set)}.
\end{align*}
\item Alice is allowed not to erase any set, or equivalently to pass her turn.
\item Bob must ensure that $\lim_{m \to \infty} \rho_m =0$.
\end{itemize}

There exists a single point
\[
\{ x_{\infty} \} = \bigcap_{m \in \N_0} B_m
\]
called the \textbf{outcome of the game}.
\item We say a set $S \subset \R^d$ is an $(\alpha, \beta, c, \rho)$-\textbf{winning set} if Alice has a strategy guaranteeing that
\[
\text{ if } x_{\infty} \notin \bigcup_{m \in \N_0} \bigcup_i A_{\rho_{i,m}} \,\, \text{\, then } x_{\infty} \in S.
\]
\end{definition}

The Potential Game has several elementary but very useful properties.
\begin{lemma}[Countable intersection property]\label{Countable intersection property}
Let J be a countable index set, and for each $j \in J$ let $S_j$ be an $(\alpha_j , \beta, c, \rho)$-winning set, where $c>0$.

Then, the set $S:= \bigcap_{j \in J} S_j$ is $(\alpha ,\beta, c, \rho)$-winning where $\alpha^c = \sum_{j \in J} \alpha_j^c$ (assuming that the series converges).
\end{lemma}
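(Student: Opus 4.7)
The plan is to have Alice play all the $(\alpha_j,\beta,c,\rho)$-games in parallel against a single Bob and combine her responses into one move in the $(\alpha,\beta,c,\rho)$-game. Concretely, for each $j\in J$ fix a winning strategy $\sigma_j$ for Alice in the $(\alpha_j,\beta,c,\rho)$-game. I will define Alice's strategy $\sigma$ in the $(\alpha,\beta,c,\rho)$-game as follows: on turn $m$, after Bob has played balls $B_0,\ldots,B_m$, Alice plays the collection
\[
\mathcal{A}_m \;:=\; \bigcup_{j\in J} \sigma_j(B_0,\ldots,B_m),
\]
which is a countable union of countable collections, hence countable, so it is an admissible move shape.

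The first thing to check is that Bob's sequence of moves is legal as input to each $\sigma_j$ separately: the defining constraints on Bob's play ($\rho_0\ge\rho$, $\rho_m\ge \beta\rho_{m-1}$, $B_m\subseteq B_{m-1}$, $\rho_m\to 0$) depend only on $(\beta,\rho)$, which is common to all games. Hence each $\sigma_j$ is well-defined on this play. The second thing to check is the potential bound for $\mathcal{A}_m$: by the hypothesis on each $\sigma_j$, the radii $\{\rho_{i,m}^{(j)}\}_i$ of $\sigma_j$'s response satisfy $\sum_i (\rho_{i,m}^{(j)})^c \le (\alpha_j\rho_m)^c$, and summing over $j$ gives
\[
\sum_{j\in J}\sum_i (\rho_{i,m}^{(j)})^c \;\le\; \sum_{j\in J} \alpha_j^c \rho_m^c \;=\; \alpha^c\rho_m^c \;=\; (\alpha\rho_m)^c,
\]
which is exactly the budget Alice is allowed in the $(\alpha,\beta,c,\rho)$-game.

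Finally I verify that this strategy wins into $S$. Let $\{x_\infty\}=\bigcap_m B_m$ and suppose $x_\infty$ is not erased by $\sigma$, i.e.\ $x_\infty\notin\bigcup_m\bigcup\mathcal{A}_m$. Since $\sigma_j(B_0,\ldots,B_m)\subseteq\mathcal{A}_m$ for every $j$ and every $m$, the point $x_\infty$ is also not erased in the $j$-th game played with strategy $\sigma_j$. By the hypothesis that $\sigma_j$ is a winning strategy for $S_j$, this forces $x_\infty\in S_j$. Since this holds for every $j\in J$, we conclude $x_\infty\in\bigcap_{j\in J}S_j=S$, which is the desired winning property for $S$.

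I don't anticipate a real obstacle here; the argument is purely organizational. The only subtlety worth flagging is that the proof uses $c>0$ in an essential way, because for $c=0$ Alice is restricted to erasing a single ball per turn, so the naive union-of-strategies construction would fail. Everything else is just checking that the potential budget, measured by the $c$-power sum, adds correctly across the countable family.
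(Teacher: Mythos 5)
Your proof is correct and follows exactly the paper's argument: Alice plays the union over $j$ of the individual strategies, and the $c$-power potential budgets add up to $(\alpha\rho_m)^c$ since $\alpha^c=\sum_j\alpha_j^c$. The extra checks you include (legality of Bob's play in each subgame, the final winning verification) are just a more detailed write-up of the same approach.
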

To see this, it is enough to consider the following strategy for Alice: in the turn $m$ she plays the union over $j$ of all the strategies of turn $m$.
For each $j$ we know that for each turn $m$ we have $\sum_i \rho_{i,m}(j)^c \leq (\alpha_j \rho_m)^c$. Now, we can see that playing all the strategies together is legal: in turn $m$ we have \[\sum_j \sum_i \rho_{i,m}(j)^c \leq \sum_j (\alpha_j \rho_m)^c= (\sum_j \alpha_j^c) \rho_m^c=\alpha^c \rho_m^c.\]

\begin{lemma}[Monotonicity]\label{Monotonicity}
If $S$ is $(\alpha , \beta, c, \rho)$-winning and $\tilde{\alpha} \geq \alpha$, $\tilde{\beta} \geq \beta$, $\tilde{c} \geq c$ and $\tilde{\rho} \geq \rho$, then $S$ is $(\tilde{\alpha} , \tilde{\beta}, \tilde{c}, \tilde{\rho})$-winning.
\end{lemma}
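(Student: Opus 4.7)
The plan is to have Alice reuse her $(\alpha,\beta,c,\rho)$-winning strategy verbatim as a strategy in the $(\tilde\alpha,\tilde\beta,\tilde c,\tilde\rho)$-game. First I would verify the easy direction: every legal move of Bob in the harder game is automatically legal in the easier game. The opening requirement becomes $\rho_0\geq\tilde\rho\geq\rho$, and the shrinking constraint becomes $\rho_m\geq\tilde\beta\,\rho_{m-1}\geq\beta\,\rho_{m-1}$; the inclusion $B_m\subseteq B_{m-1}$ and the requirement $\rho_m\to 0$ are identical in the two games. So Alice can feed Bob's moves into her stored $(\alpha,\beta,c,\rho)$-strategy as they arrive and record whatever balls it tells her to erase.

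The one real point to check is that those recommended erasures obey the harder-game bookkeeping. In the case $c>0$, the stored strategy guarantees $\sum_i\rho_{i,m}^c\leq(\alpha\rho_m)^c$. Using the standard nesting of $\ell^p$-norms on counting measure,
\[
\Bigl(\sum_i x_i^{\tilde c}\Bigr)^{1/\tilde c}\leq\Bigl(\sum_i x_i^{c}\Bigr)^{1/c}\qquad\text{for }0<c\leq\tilde c,\ x_i\geq 0,
\]
applied with $x_i=\rho_{i,m}$, together with $\alpha\leq\tilde\alpha$, yields
\[
\Bigl(\sum_i\rho_{i,m}^{\tilde c}\Bigr)^{1/\tilde c}\leq\alpha\rho_m\leq\tilde\alpha\rho_m,
\]
i.e.\ $\sum_i\rho_{i,m}^{\tilde c}\leq(\tilde\alpha\rho_m)^{\tilde c}$, which is exactly what the $\tilde c$-rule asks for. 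The degenerate cases are immediate: if $c=\tilde c=0$ then the single erased ball has $\rho_{1,m}\leq\alpha\rho_m\leq\tilde\alpha\rho_m$; and if $c=0<\tilde c$, that same single ball satisfies $\rho_{1,m}^{\tilde c}\leq(\alpha\rho_m)^{\tilde c}\leq(\tilde\alpha\rho_m)^{\tilde c}$ by monotonicity of $t\mapsto t^{\tilde c}$.

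Since the play produced against Alice in the harder game is also, from her stored strategy's point of view, a legal play of the easier game, the winning property transfers directly: if the common outcome $x_\infty$ avoids every ball Alice erased, then $x_\infty\in S$, and this is precisely the $(\tilde\alpha,\tilde\beta,\tilde c,\tilde\rho)$-winning condition. I do not expect any real obstacle. The only step that requires a thought is the $\ell^p$-nesting inequality, and it reduces in one line (after normalizing $\sum x_i^c=1$, each $x_i\leq 1$, hence $x_i^{\tilde c}\leq x_i^c$); the rest is just a careful tour through the four parameters and the $c=0$ clause of the definition.
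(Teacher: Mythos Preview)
Your proposal is correct and follows exactly the approach the paper indicates: the paper's proof consists of the single sentence ``one can check that Alice can answer in the $(\tilde{\alpha},\tilde{\beta},\tilde{c},\tilde{\rho})$-game using her strategy from the $(\alpha,\beta,c,\rho)$-game,'' and you have simply carried out that check in full, including the $\ell^p$-nesting step needed for the $c\to\tilde c$ transition. There is nothing to add.
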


Indeed, one can check that Alice can answer in the $(\tilde{\alpha} , \tilde{\beta}, \tilde{c}, \tilde{\rho})$-game using her strategy  from the $(\alpha , \beta, c, \rho)$-game.


\begin{lemma}[Invariance under similarities]\label{Invariance under similarities}
Let $f:\R^d \to \R^d$ be a  similarity with contraction ratio $\lambda$. Then a set $S$ is $(\alpha , \beta, c, \rho)$-winning if and only if the set $f(S)$ is $(\alpha , \beta, c, \lambda \rho)$-winning.
\end{lemma}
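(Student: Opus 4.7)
The plan is to set up a bijection between plays of the $(\alpha,\beta,c,\rho)$-game and plays of the $(\alpha,\beta,c,\lambda\rho)$-game induced by the similarity $f$, and then transport Alice's winning strategy across this bijection. Since similarities send balls to balls, scaling radii by exactly $\lambda$, the structure of the game is preserved, and by symmetry (applying the argument to $f^{-1}$, which is a similarity with ratio $\lambda^{-1}$) it suffices to prove one direction.

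Suppose $S$ is $(\alpha,\beta,c,\rho)$-winning, with Alice having a winning strategy $\sigma$. I would describe Alice's strategy $\tilde\sigma$ in the $(\alpha,\beta,c,\lambda\rho)$-game for the set $f(S)$ as follows: whenever Bob plays a ball $\tilde B_m = B[\tilde x_m,\tilde\rho_m]$, Alice interprets $f^{-1}(\tilde B_m) = B[f^{-1}(\tilde x_m),\tilde\rho_m/\lambda]$ as Bob's $m$-th move in a simulated $(\alpha,\beta,c,\rho)$-game, asks her original strategy $\sigma$ for a response $\{A_{i,m}\}_i$ with radii $\rho_{i,m}$, and then plays $\{f(A_{i,m})\}_i$ with radii $\lambda\rho_{i,m}$ in the actual game.

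One then checks that each legality condition transfers correctly under the scaling by $\lambda$: the initial radius condition becomes $\tilde\rho_0/\lambda \geq \rho$, the contraction condition $\tilde\rho_m \geq \beta \tilde\rho_{m-1}$ and nesting $\tilde B_m \subseteq \tilde B_{m-1}$ are invariant under applying $f^{-1}$, and the budget condition for Alice rescales exactly:
\[
\sum_i (\lambda \rho_{i,m})^c = \lambda^c \sum_i \rho_{i,m}^c \leq \lambda^c (\alpha\, \tilde\rho_m/\lambda)^c = (\alpha\,\tilde\rho_m)^c,
\]
with the analogous single-ball check when $c=0$. The condition $\lim \tilde\rho_m = 0$ is clearly equivalent to $\lim \tilde\rho_m/\lambda = 0$.

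Finally, the outcome $\tilde x_\infty$ of the actual game satisfies $f^{-1}(\tilde x_\infty) = x_\infty$, where $x_\infty$ is the outcome of the simulated game. If $\tilde x_\infty$ avoids all balls erased by $\tilde\sigma$, then $x_\infty$ avoids all balls erased by $\sigma$ (since $f$ is a bijection matching erased balls), so $x_\infty \in S$ and thus $\tilde x_\infty \in f(S)$. I do not foresee a genuine obstacle here; the only point requiring care is confirming that every clause in the definition of the game rescales cleanly by $\lambda$, which is just the observation that similarities commute with the notion of radius.
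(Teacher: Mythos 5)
Your proposal is correct and is exactly the paper's argument: the paper disposes of this lemma with the single line ``This follows by mapping Alice's strategy by $f$,'' and your write-up simply carries out the routine verifications (rescaling of radii by $\lambda$ in the initial-radius, contraction, budget, and outcome conditions) that this one-liner leaves implicit. No further comment is needed.
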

This follows by mapping Alice's strategy by $f$.

In \cite{Y21}, I established a new connection between Schmidt's Games and thickness in the real line and generalized the result by Broderick, Fishman and Simmons:
\begin{theorem} \label{thm:y21}
Let $C\subset\mathbb{R}$ be a compact set. Then $C$ contains a homothetic copy of every set $P$ with at most
\[
N(\tau):=\left\lfloor\frac{\log (4)}{4 e (720)^2}\frac{\tau}{\log (\tau)}\right\rfloor
\]
elements.
Moreover, for each such set $P$, the compact set $C$ contains $\lambda P+x$ for some $\lambda>0$ and a set of $x$ of positive Hausdorff dimension.
\end{theorem}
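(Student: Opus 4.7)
The plan is to reduce the pattern problem to an intersection problem and then resolve the intersection via the Potential Game, following the scheme sketched after Theorem \ref{thm:BFS}. Writing $P=\{p_1,\ldots,p_n\}$, the key observation is
\[
\lambda P+x\subseteq C \iff x\in \bigcap_{k=1}^{n}(C-\lambda p_k),
\]
so it suffices, for a suitably small $\lambda>0$ (small compared to $|\conv(C)|/\diam(P)$), to show that this intersection has positive Hausdorff dimension; this will deliver both the homothetic copy and the ``moreover'' clause in one stroke.

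The first main step is to establish that any compact set $C\subset\R$ of thickness $\tau$ is, up to a controlled subset, an $(\alpha,\beta,c,\rho)$-winning set for the Potential Game with parameters quantified by $\tau$. Alice's strategy should exploit the defining inequality of thickness: at each scale $\rho_m$, every gap $G$ meeting Bob's ball $B_m$ has bridges $L,R$ of length at least $\tau|G|$. So whenever Bob's ball straddles a gap, Alice can ``push him onto a bridge'' by erasing a ball inside the gap using only a fraction of order $1/\tau$ of her radius budget, while at scales where no relevant gap intersects $B_m$ she simply passes. One expects parameters of order $\alpha\lesssim 1/\tau$, with $c$ close to the dimension lower bound from Theorem \ref{dimthickness}, i.e.\ $c\approx \log 2/\log(2+1/\tau)$, and $\beta$ comparable to that same contraction ratio.

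Next, similarity invariance (Lemma \ref{Invariance under similarities}) transfers winning to each translate $C-\lambda p_k$ with the same $(\alpha,\beta,c)$, and the countable intersection property (Lemma \ref{Countable intersection property}) makes $\bigcap_{k=1}^{n}(C-\lambda p_k)$ a $(\tilde\alpha,\beta,c,\rho)$-winning set with $\tilde\alpha^c=n\alpha^c$. A standard winning-implies-positive-dimension argument (the one alluded to between Theorems \ref{thm:BFS} and \ref{thm:y21}) then yields a positive Hausdorff dimension lower bound on the intersection, provided $\tilde\alpha$ stays below an explicit threshold depending on $\beta$ and $c$ (by monotonicity, Lemma \ref{Monotonicity}, one can relax constants as needed). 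Solving $n\alpha^c<\text{threshold}$ and substituting the estimates of $\alpha$ and $c$ in terms of $\tau$ produces a bound of the asserted shape $n\lesssim \tau/\log\tau$; the peculiar constants $\log 4$, $4e$, $720^2$ emerge from carefully balancing the $\alpha,\beta,c,\rho$ budget.

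The principal obstacle lies in the first step: turning the one-inequality-per-scale thickness condition into a quantitative scale-by-scale strategy for Alice whose total cost meets the budget $\sum_i\rho_{i,m}^{c}\leq(\alpha\rho_m)^{c}$. Thickness is a property of the whole gap sequence, but Alice has to react to Bob's arbitrary zooming, so one must track which gaps of $C$ become ``active'' at each of Bob's scales and amortize Alice's erasures over cascades of nested gaps. The rigid self-similarity that made \cite{BFS} work for $M_\eps$ is not available here, so one has to code $C$ by its gap tree, in the spirit of the well-definedness argument above, and check that the proportions controlled by $\tau$ keep Alice's strategy admissible across all scales simultaneously.
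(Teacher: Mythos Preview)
Your outline matches the paper's approach almost exactly: reduce to an $n$-fold intersection, establish that (an augmentation of) $C$ is winning for the potential game, transfer to the translates via Lemma~\ref{Invariance under similarities}, combine via Lemma~\ref{Countable intersection property}, and finish with the explicit dimension lower bound \cite[Theorem~19]{Y21}. One small correction: the augmentation is by a \emph{superset}, not a subset---the paper works with $S:=(-\infty,0)\cup C\cup(1,+\infty)$ and only at the very end intersects with the fixed interval $B=[3/8,5/8]$, using the normalization $P\subset[0,1/8]$ to ensure that points of $S\cap B$ translate into $C$ rather than into the tails.

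Where you diverge is in what you flag as the principal obstacle, and here the paper has a simplification you are missing. There is no amortization, no cascade bookkeeping, and no gap tree: Proposition~\ref{TauWinning} is proved with $c=0$, meaning Alice erases at most \emph{one} gap per turn. At step $m$ she deletes the first $G_n$ (in the non-increasing ordering) that meets $B_m$ and satisfies $|B_m|\le\min\{|L_n|,|R_n|\}$; since $B_m$ then fits inside $L_n\cup G_n\cup R_n$, it cannot touch any earlier gap, so this $G_n$ is well defined, and legality is the single inequality $|G_n|\le\tau^{-1}\min\{|L_n|,|R_n|\}\le(\tau\beta)^{-1}|B_m|$. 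Monotonicity (Lemma~\ref{Monotonicity}) then upgrades the $c=0$ conclusion to any $c>0$ for free, after which the countable intersection property gives $\bigcap_i S_i$ winning with $\alpha^c=n(\tau\beta)^{-c}$. The specific choices $\beta=1/4$ and $c=1-1/\log(\tau\beta)$ (not the Hausdorff-dimension exponent you guessed, though both tend to $1$) are then plugged into the threshold $n\alpha^c\le \tfrac{1}{720^2}(1-\beta^{1-c})$ to produce the stated $N(\tau)$.
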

Note that this result gives non-trivial information only when $N(\tau)\ge 3$, which requires the thickness to be larger than some large absolute constant. The main usefulness of the theorem is for large values of $N(\tau)$. Theorem \ref{thm:y21} generalizes the result of Broderick, Fishman and Simmons, since $M_{\eps}$ is a compact set with thickness $\sim 1/\eps$ and an arithmetic progression is a homothetic copy of $\{1,\ldots,N\}$.

Let us see the main ideas behind the proof of Theorem \ref{thm:y21}. Given a finite set $P:=\{p_1, \cdots, p_n\}$,
\begin{align*}
&C \text{ contains a homothetic copy of  } P \\
&\Leftrightarrow \exists \lambda\neq0 , \text{ there is } x \text{ so that } x+\lambda p_k \in C \ \forall 1\leq k\leq n\\
&\Leftrightarrow \exists \lambda\neq0 \text{ so that } \bigcap_{1\leq k \leq n} (C-\lambda p_k)\neq \emptyset.
\end{align*}
Then, to guarantee a pattern of size $n$ we need to check that certain intersection of $n$ sets is nonempty (in fact, the proof shows that the intersection has positive Hausdorff dimension). We know from Lemma \ref{Countable intersection property} that winning sets have certain stability under intersections. It is not obvious that winning sets intersect a given interval, but Broderick, Fishman and Simmons \cite[Theorem 5.5]{BFS} proved that, depending on the parameters, the intersection of a winning set with an interval not only is nonempty but has positive Hausdorff dimension. While \cite[Theorem 5.5]{BFS} involves some non-explicit constants, in the context relevant to Theorem \ref{thm:y21} this result was made completely explicit in \cite[Theorem 19]{Y21}.

What is remaining to prove Theorem \ref{thm:y21} is the link between thick sets and winning sets. This is provided by the following result:
\begin{proposition}\label{TauWinning}
Let $C$ be a compact set with $conv (C)=[0,1]$ and $\tau:= \tau(C)>0$. Then $S:=(-\infty,0) \cup C \cup (1, +\infty)$ is $\left( \frac{1}{\tau \beta}, \beta, 0, \frac{\beta}{2}\right)$-winning for all $\beta \in (0,1)$.
\end{proposition}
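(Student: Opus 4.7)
My plan is to give Alice an explicit strategy that at each turn erases a ball covering a carefully chosen gap of $C$, and then to use the thickness hypothesis to show that any outcome not erased must lie in $S$.

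The strategy is the following. At step $m$, Alice considers the collection of gaps $G$ of $C$ with $G \cap B_m \neq \emptyset$ and $|G|/2 \leq \alpha\rho_m$; if this collection is nonempty she picks the largest such gap $G^*_m$ and erases the closed ball of radius $\alpha\rho_m$ centered at the midpoint of $G^*_m$ (which contains $G^*_m$), and otherwise she passes. This is a legal single-ball move under the $c=0$ rules.

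To verify correctness, suppose the outcome $x_\infty$ lies in no erased ball; if $x_\infty\notin[0,1]$ or $x_\infty\in C$ we are done, so suppose $x_\infty\in G_n$ for some gap $G_n$ of $C$. Let $m^\dagger$ be the first step with $\rho_{m^\dagger} < \tau|G_n|/2$ (which exists since $\rho_m\to 0$). Using $\rho_{m^\dagger}\geq \beta\rho_{m^\dagger-1}\geq \beta\tau|G_n|/2$ (or, in the base case $m^\dagger=0$, the initial bound $\rho_0\geq \beta/2$ together with $|G_n|\tau\leq |G_1|\tau \leq \tau/(1+2\tau)<1$), one checks that $\rho_{m^\dagger}\geq \tau\beta|G_n|/2$, equivalently $|G_n|/2\leq \alpha\rho_{m^\dagger}$. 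Hence $G_n$ is eligible at step $m^\dagger$ and intersects $B_{m^\dagger}$ via $x_\infty$.

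The core of the argument is then to show that $G_n$ is actually the largest eligible gap intersecting $B_{m^\dagger}$, so that Alice's strategy selects $G_n$ and the corresponding erased ball contains $x_\infty$. This is where thickness enters: I would show that every gap $G'\neq G_n$ of $C$ with $|G'|\geq |G_n|$ satisfies $\dist(G',G_n)\geq \tau|G_n|$. If $G'$ is removed before $G_n$ in the construction, then $G'$ lies outside the parent interval $I$ of $G_n$ at the moment of removal, and the bridges $L_n, R_n$ of length $\geq\tau|G_n|$ separate $G_n$ from the boundary of $I$; if $G'$ is removed after $G_n$ (forcing $|G'|=|G_n|$ and $G'\subseteq L_n\cup R_n$), then the bridge of $G'$ on the side facing $G_n$ has length $\geq \tau|G'|=\tau|G_n|$ and lies between $G'$ and $G_n$. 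Since $\diam(B_{m^\dagger})=2\rho_{m^\dagger}<\tau|G_n|$, no such $G'$ can intersect $B_{m^\dagger}$, so $G_n$ is indeed Alice's chosen gap. The erased ball then covers $G_n\ni x_\infty$, contradicting our assumption. The main obstacle is this distance estimate $\dist(G',G_n)\geq \tau|G_n|$ and the bookkeeping along the construction tree of $C$; once that is in hand, the remaining arithmetic just pins down why the particular parameters $\alpha=1/(\tau\beta)$ and $\rho=\beta/2$ are exactly what the argument needs.
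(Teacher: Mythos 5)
Your proposal is correct and follows essentially the same approach as the paper's proof: Alice erases the largest gap meeting Bob's ball subject to the legality constraint, the critical turn is the first one at which Bob's radius drops below (a multiple of) the bridge length of the gap containing $x_\infty$, legality there follows from $\rho_m \geq \beta\rho_{m-1}$ (or $\rho_0 \geq \beta/2$ in the base case), and the bridge separation $\dist(G',G_n)\geq \tau|G_n|$ guarantees that the selected gap is the one containing $x_\infty$. The only slip is that a gap $G'$ removed after $G_n$ with $|G'|=|G_n|$ need not lie in $L_n\cup R_n$; it may sit in a construction interval disjoint from the parent of $G_n$, but then the required separation follows from the same argument as your first case, so this is cosmetic.
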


\begin{proof}
In order to prove that a set $S$ is winning, we have to see that Alice is able to erase the complement of $S$ where Bob is zooming-in.

If Bob plays $B$, how does Alice respond? Let $(G_n)_n$ be the sequence of complementary open gaps of $S$, ordered by non-increasing length.

\textbf{Alice's strategy}: If there exists $n \in \N$ such that $B$ intersects $G_n$ and $|B| \leq \min \{ |L_n| ,|R_n| \}$, then Alice erases $G_n$ if it is a legal movement. In any other case (if $B$ does not intersect any gap of $S$ or if $|B|>\min\{ |L_n|, |R_n|\}$), Alice does not erase anything.

To show that this strategy is winning, suppose that Alice does not erase $x_{\infty}$ during the game. We want to see that $x_{\infty} \in S$.
Let us make a counter-assumption that $x_{\infty} \notin S$. Then there exists $n$ such that $x_{\infty} \in G_n$.
We will show that Alice erases $G_n$ at some stage of the game (which is a contradiction).
By definition, $x_{\infty} \in B_m$ for all $m \in \N_0$, and we assumed $x_{\infty} \in G_n$, so
\[x_{\infty} \in B_m \cap G_n \text{ for all } m \in \N_0.\]

Since $\tau>0$, we have that $\min \{ |L_n|,|R_n| \}>0$, and we also know that $\lim_{m \to \infty}|B_m|=0$. So, taking $m_n$ to be the smallest integer such that
\[\min \{ |L_n|,|R_n| \} \geq |B_{m_n}|,\]
we know that
\begin{equation*}\label{eq}B_{m_n}
\cap G_n \neq \emptyset \text{ and then }B_{m_n} \cap G_k = \emptyset \ \forall 1 \leq k <n.
\end{equation*}
Then $|B_{m_n}|\geq \beta \min \{ |L_n|,|R_n| \}$. Indeed:
\[\text{If }m_n=0, \text{ then } |B_0|=2\rho_0 \geq 2 \rho=\beta \geq \beta \min \{ |L_n|,|R_n| \}.\]
\[
\text{If } m_n>0, \text{ then } |B_{m_n}|\geq \beta |B_{m_n-1}|> \beta \min \{ |L_n|,|R_n| \}.
\]

Recall that we proved that $B_{m_n} \cap G_n \neq \emptyset \text{ and }B_{m_n} \cap G_k = \emptyset \, \forall 1 \leq k <n$, and $\beta \min \{ |L_n|,|R_n| \} \le |B_{m_n}| \le \min \{ |L_n|,|R_n| \}$. Hence,
\[
|G_n|\leq \frac{1}{\tau} \min \{ |L_n|,|R_n| \} \leq \frac{1}{\tau \beta} |B_{m_n}|=\alpha |B_{m_n}|.
\]
Since $G_n$ is the \textbf{first} gap intersecting $B_{m_n}$, the gap $G_n$ is uniquely defined (there aren't two gaps that Alice should erase in the same turn). In conclusion,  it is legal for Alice to erase $G_n$ in the $m_n$-th turn, and her strategy specifies that she does so.
\end{proof}

Sketch of proof of Theorem \ref{thm:y21}:
We can assume without loss of generality that $\conv (C)=[0,1]$, and also that the pattern with $n$ elements is $\{b_1, \cdots ,b_n\} \subseteq [0,\frac{1}{8}]$.
We define \[S_i:=(-\infty,-b_i) \cup (C-b_i) \cup (1-b_i, +\infty).\]
Using Propositions \ref{TauWinning}, \ref{Monotonicity}, \ref{Invariance under similarities} and \ref{Countable intersection property}  we get that $S:=\bigcap_{1\leq i \leq n} S_i$ is $\left( \frac{n^{\frac{1}{c}}}{\tau \beta}, \beta, c, \frac{\beta}{2}\right)$-winning for all $\beta \in (0,1)$ and all $c>0$.
We define $\alpha:=\frac{1}{\tau \beta}$, and take $\beta:=\frac{1}{4}$, $c:=1-\frac{1}{\log(\alpha^{-1})}$, and $B:=[\frac{3}{8}, \frac{5}{8}]$ which is an interval of length $\frac{1}{8}=\frac{\beta}{2}=:\rho$.
Then, applying \cite[Theorem 19]{Y21} (which is a very technical result from where we get the constant $\frac{\log (4)}{4 e (720)^2}$), one gets the following condition:
\[\text{dim}_{\rm H} (S \cap B)>0\]
if \begin{equation*} n \alpha^c \leq \frac{1}{720^2} (1 − \beta^{1−c}). \end{equation*}

So, to guarantee the presence of a homothetic copy of a set of size $n$, it is sufficient that $n$ satisfies the hypothesis of the Theorem.

For those values of $n$, we have seen $\text{dim}_{\rm H}(S \cap B)>0$. For each $x\in S\cap B$, using $0 \leq b_i \leq \frac{1}{8}$, we have $$x+b_i \in (B+b_i)\cap(S+b_i) \subset \left[ \frac{3}{8}, \frac{6}{8} \right]\cap \left( (-\infty, 0) \cup C \cup (1, +\infty) \right).$$
Since $[\frac{3}{8}, \frac{6}{8}]$ is disjoint from $(-\infty, 0)$ and $(1, +\infty)$, we have that $x+b_i \in C$.

So $x+ \{ b_1, \cdots , b_n \}$ is a translated copy of the given finite set, which is contained in $C$.

\section{Extensions of thickness to higher dimensions}

The definition of thickness and the proof of the Gap Lemma strongly use the order structure of the reals. It has been an open problem to find a satisfactory extension to higher dimensions. Some of the existing attempts include:

\begin{itemize}
\item S. Biebler \cite{Biebler} defined a notion of thickness that applies to dynamically defined sets in the complex plane.
\item De-Jun Feng and Y. Wu  \cite{DJF} defined another notion of thickness that is useful to study arithmetic sums (they did not however study the Gap Lemma).
\end{itemize}

In the rest of the article, I describe two more recent notions developed in \cite{FalconerYavicoli, y22}.

\subsection{Thickness in $\R^d$ (useful for cut-out type sets)}

With Kenneth Falconer \cite{FalconerYavicoli}, we gave a different definition of thickness in $\R^d$ that is useful for sets of cut-out type (see Figure \ref{Fig:kenneth} for an example).

Let $E \subset \R^d$ be an open path-connected set whose complement is a non-empty compact set, and let $(G_n)_n$ be a sequence of disjoint path-connected open sets contained in $E^C$. In the special case $d=1$ the set $E$ is formed by the union of two disjoint open unbounded sets (this is the only case in which $E$ is not a path-connected set).
We say that $E$ is the external component and $(G_n)_n$ is the sequence of gaps associated to the cut-out type set \[C:=E^C \setminus \bigcup_n G_n.\]
We can consider the sequence of gaps ordered by non-increasing diameter.

\begin{figure}
\includegraphics[width=8cm,height=4cm]{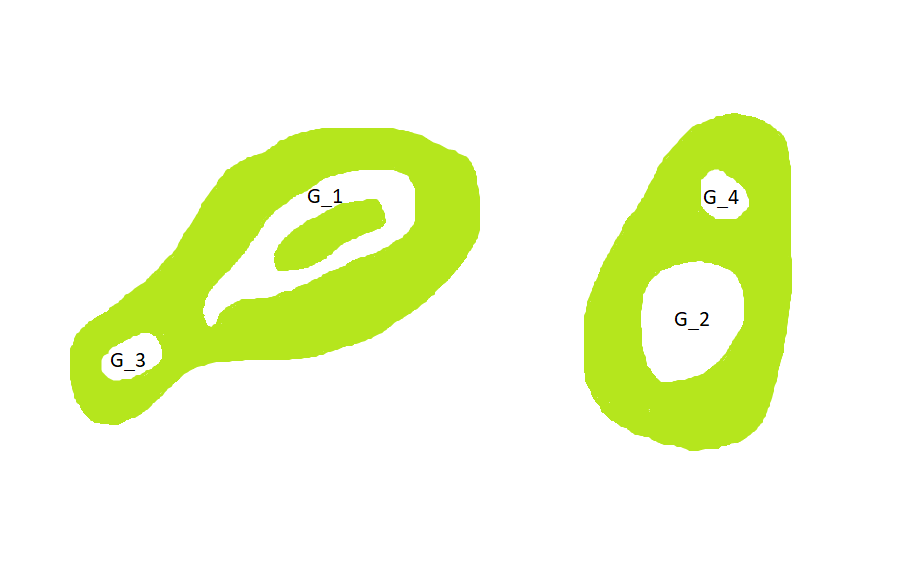}
\caption{Step 4 of construction of a cut-out type set}\label{Fig:kenneth}
\end{figure}

We define
\[\tau(C):=\inf_{n \in \N} \frac{\dist (G_n, \bigcup_{1\leq i \leq n-1}G_i \cup E)}{\diam (G_n)}.\]
provided that there is at least one gap $G_n$.
In case there are not gaps, we define
\begin{equation}
\tau (C):= \left\{ \begin{array}{lcc}
             +\infty &   \text{if}  & C^{\circ} \neq \emptyset \\
             \\ 0 &  \text{if}  & C^{\circ} = \emptyset
             \end{array} \right .
 \end{equation}

This definition has certain advantages: it can take any value in $[0, \infty]$, it is invariant under homothetic functions, and on the real line it coincides with the classical one. In \cite{FalconerYavicoli} we obtained a first extension to the Gap Lemma to $\R^d$, and also studied intersections of countably many thick sets.
But there is a significant drawback: sets of positive thickness look like cut-out sets (``poking holes''). In particular, totally disconnected sets, which are of special interest in dynamical systems and fractal geometry, have zero thickness and so this notion is not suitable to study them.

\subsection{Thickness in $\R^d$ (useful in general, even for totally disconnected sets)}

In \cite{y22}, I was able to give a definition of thickness that is useful also for many totally disconnected sets, and proved a higher dimensional Gap Lemma, among other results.

From now on, we will work on $\R^d$ equipped with the distance $\dist_\infty$ coming from the infinity norm, and all cubes (balls for this distance) will be closed. Recall that $\dist_\infty (x,y):=\|x-y\|_{\infty}:=\max_{1 \leq i \leq d} | x_i - y_i|$. In fact the notion of thickness and the results extend to any norm, but the infinity norm is the most useful one because cubes can be used to efficiently pack larger cubes (as opposed to, for example, Euclidean balls).

Given a word $I$ (a finite sequence of natural numbers) we denote the length of $I$ by $\ell(I) \in \N_0$. We say that $(S_I)$ is a \emph{system of cubes} for a compact set $C$ if
\[
C=\bigcap_{n \in \N_0} \bigcup_{\ell(I)=n}S_I
\hspace{1cm}\text{(Moran construction)},\]
where
\begin{itemize}
\item Each $S_I$ is a cube and contains $\{ S_{I,j} \}_{1 \leq j \leq k_I}$. No assumptions are made on the separation of the $S_{I,j}$.
\item For every infinite word $i_1, i_2, \cdots$ of indices of the construction, \[\lim_{n \to +\infty} \rad (S_{i_1,i_2, \cdots, i_n})=0.\]
\end{itemize}
Note that any compact set arises from multiple systems of cubes but, for any $d\ge 2$, there is no canonical way to choose a system of cubes for a given set.

Let us fix a system of cubes $(S_I)$ for $C$. The main issue with extending the notion of thickness to compact sets in $\R^d$ is that there isn't a suitable notion of ``gap''. We use the following notion as a substitute of gap-size (see Figure \ref{hI}):
\[h_I:= \max_{x \in S_I} \dist_{\infty} (x, C).\]
\begin{figure}\includegraphics[width=7cm]{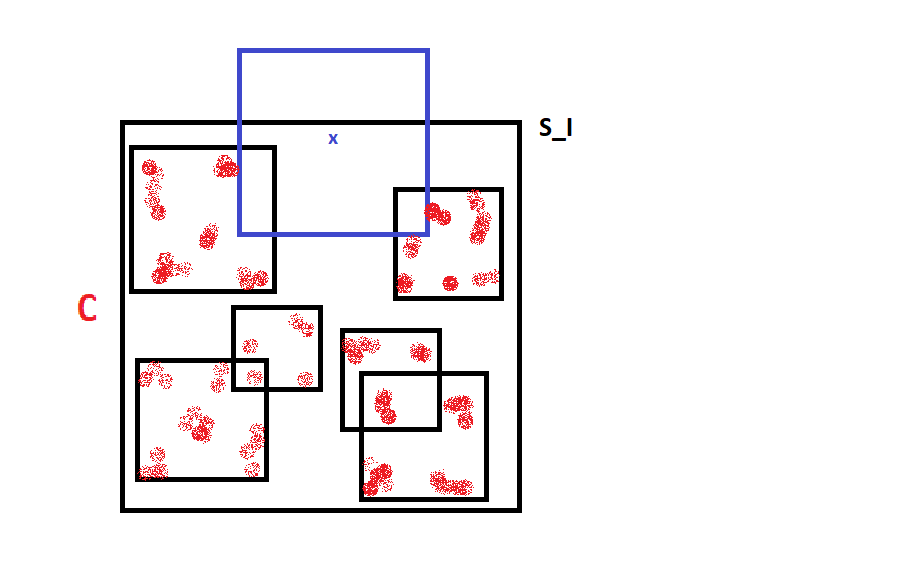}\caption{The radius $h_I$ of the blue square is the substitute of the notion of gap-size}\label{hI}\end{figure}

Hence $h_I$ is characterized by the properties that if $z \in S_I$ and $r \in [h_I, +\infty)$, then the closed cube $B(z, r)$ intersects $C$ but,
on the other hand, for every $r <h_I$ there is  $z \in S_I$ such that $B(z,r)\cap C=\emptyset$.


\begin{definition}[Thickness of $C$ associated to the system of cubes $\{S_I\}_I$] \label{def:thickness-Rd}
\[\tau(C, \{S_I\}_I):= \inf_{n \in \N_0} \inf_{\ell(I)=n} \frac{\min_i \rad(S_{I,i})}{h_I}.\]
\end{definition}

This definition preserves some of the basic properties of Newhouse thickness. Indeed, on the real line it agrees with Newhouse's thickness (for the natural system of cubes arising in Newhouse's definition). Thickness is invariant under homothetic functions (for the system of cubes obtained via mapping by the corresponding function). As in the real line, if a set has large thickness then it also has large Hausdorff dimension, assuming that each cube has at least $M_0\ge 2$ non-overlapping children (a mild and reasonable assumption that is automatic on $\R$):
\[
\dim_H (C) \geq \frac{d}{1+\frac{\log (1+\frac{1}{\tau})}{\log (M_0)}}.
\]
See \cite[Lemma 4]{y22}.

Unlike Newhouse's definition, the above notion of thickness depends on the system of cubes used to generate $C$. If the cubes provide a ``bad approximation'', then the resulting value for $\tau$ can be artificial. To understand this, consider the example $C:=\{0\} \subseteq \R$ and the system of cubes that has just one cube in each level: $\{[-\frac{1}{n},\frac{1}{n}]\}_{n \in \N}$. Then $h_n=\frac{1}{n}$, so
\[\tau \left( C, \{[-1/n,1/n\}_{n \in \N} \right)=\inf_{n \in \N}\frac{\frac{1}{n+1}}{\frac{1}{n}}=\inf_{n \in \N}\frac{n}{n+1}=\frac{1}{2}.\]
But intuitively one expects the thickness of a singleton to be $0$.

In order to state the Gap Lemma in $\R^d$, we need an additional condition that says that the children of any cube in the system are ``well spread out''. Part of the motivation for this definition is to avoid pathological systems of cubes such as the above example.
\begin{definition}
We say that a system of cubes $\{S_I\}_I$ is \emph{$r$-uniformly dense} if for every $I$, for every cube $B \subseteq S_I$ with $\rad (B)\geq r \ \rad (S_I)$, there is a child $S_{I,i} \subseteq B$.
\end{definition}

Let us consider an example. Fix $n \ge 2$ and $\ell \in (0, \frac{2}{n})$. We consider a \textbf{corner Cantor set} $C = C_{\ell,n} \subseteq (\R^d, \dist_{\infty})$ as in Figure \ref{fig:corner}. Let $g=\frac{2-n\ell}{n-1}$. One can check that the thickness is given by $\tau(C) =\frac{\ell}{g}=\frac{\ell (n-1)}{2-n\ell}$, and the set is $r:=\frac{1}{2}( 2 \ell + g)= \left(\ell +\frac{2-n\ell}{2(n-1)}\right)$-uniformly dense.

\begin{figure}
\begin{center} \includegraphics[width=5cm]{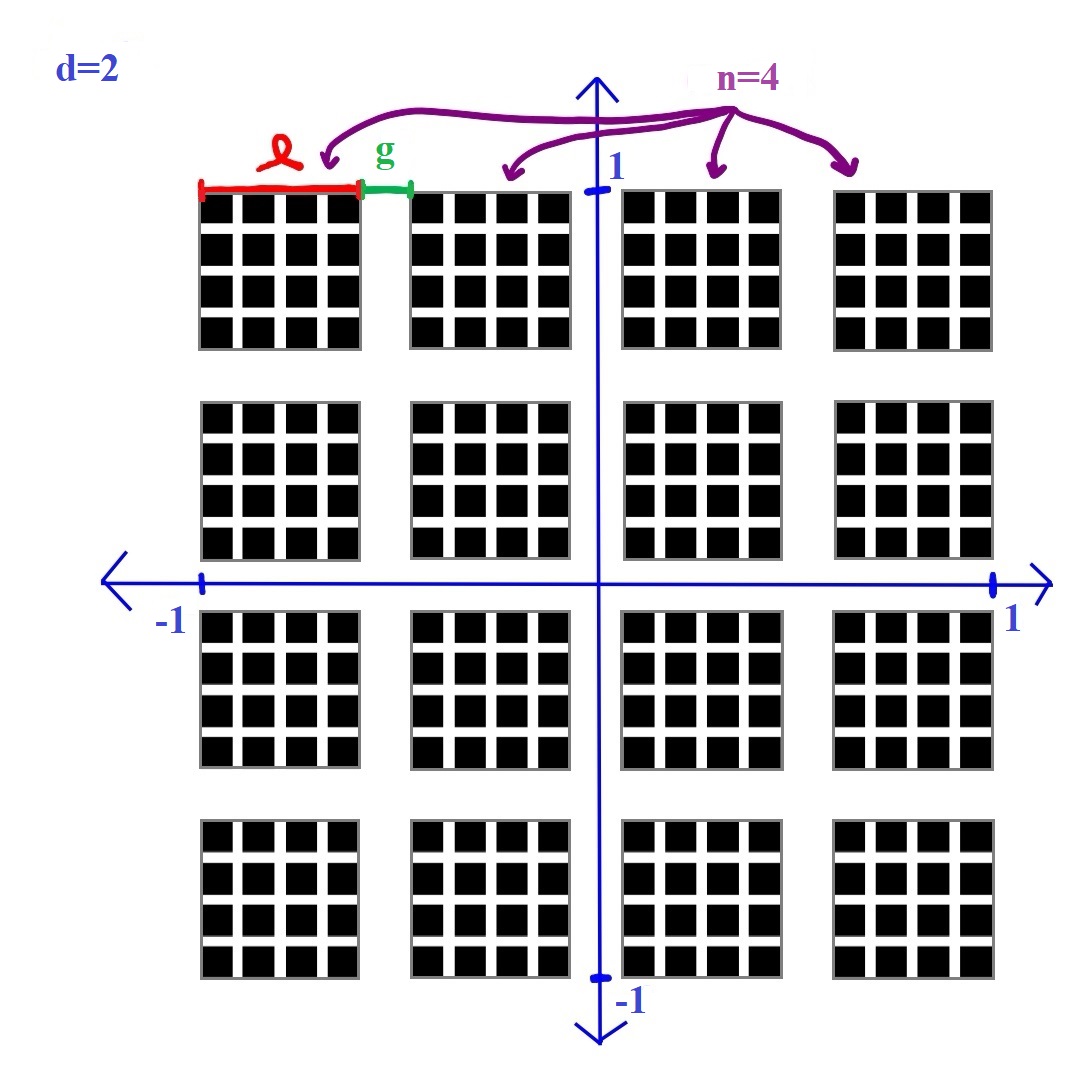}\end{center}
\caption{A corner Cantor set}\label{fig:corner}
\end{figure}

\begin{theorem}[Higher dimensional Gap Lemma \cite{y22}]\label{Theo:GapLemma}
Let $C^1$ and $C^2$ be two compact sets in $(\R^d, \dist_{\infty})$, generated by systems of cubes $\{S_I^1\}_I$ and $\{S_L^2\}_L$ respectively, and fix $r \in (0, \frac{1}{2})$. Assume:
\begin{enumerate}[(i)]
\item \label{it:i} $\tau(C^1, \{S_I^1\}_I) \tau(C^2, \{S_L^2\}_L)\ge \frac{1}{(1-2r)^2}$,
\item \label{it:ii} $\{S_I^1\}_I$ and $\{S_L^2\}_L$ are $r$-uniformly dense.
\item \label{it:iii} $C^1 \cap (1-2r) S_{\emptyset}^2 \neq \emptyset$ and $\rad (S_{\emptyset}^1) \geq r \ \rad (S_{\emptyset}^2)$
\end{enumerate}
Then, \[C^1 \cap C^2 \neq \emptyset.\]
\end{theorem}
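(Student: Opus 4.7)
The plan is to argue by contradiction: assume $C^1\cap C^2=\emptyset$. I would construct, by induction on $k\ge 0$, a sequence of pairs of cubes $(S^1_{I_k},S^2_{L_k})$ from the two systems, with $I_k,L_k$ extending at each step as prefixes, such that a compatibility invariant linking the two cubes is maintained. Writing $\widetilde S:=(1-2r)S$ for the concentric shrunken cube, a natural invariant---inspired by hypothesis~(iii) and by the role of the ``bridge contained in the gap'' in the proof of Theorem~\ref{NewhouseGapLemma}---is the following: there exists $p\in C^1$ with $p\in S^1_{I_k}\cap\widetilde S^2_{L_k}$ and the size comparison $\rad(S^1_{I_k})\ge r\,\rad(S^2_{L_k})$ holds (or the symmetric statement with the roles of the two systems swapped). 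The base case $k=0$ is precisely hypothesis~(iii).

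For the inductive step, given the invariant at stage $k$, we produce a descendant in at least one system while preserving it. Passing to a child $S^1_{I_k,j}$ containing $p$ on the $C^1$-side is immediate. If this child is still large enough compared to $S^2_{L_k}$, we update $I_{k+1}:=I_k\cdot j$ and leave $L_{k+1}:=L_k$. Otherwise we switch to the swapped version of the invariant: we produce a point $q\in C^2$ close to $p$ (using $p\in S^2_{L_k}$, which forces $\dist_\infty(p,C^2)\le h^2_{L_k}$, together with the thickness bound $h^2_{L_k}\le\rad(S^2_{L_k})/\tau(C^2)$), and apply $r$-uniform density to find a child of $S^2_{L_k}$ whose shrunken version contains $q$ at the desired depth. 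The hypothesis $\tau(C^1)\tau(C^2)\ge(1-2r)^{-2}$ balances the geometric loss on both sides when transferring from a cube to a shrunken descendant: one factor of $(1-2r)$ is spent going from $S^1_{I_k}$ into $\widetilde S^1_{I_{k+1}}$ and one from $S^2_{L_k}$ into $\widetilde S^2_{L_{k+1}}$.

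Once the invariant is maintained for all $k$, both $\ell(I_k)$ and $\ell(L_k)$ tend to infinity (one system cannot indefinitely contain progressively smaller cubes from the other without eventually refining), so the Moran property forces both $\rad(S^1_{I_k})$ and $\rad(S^2_{L_k})$ to zero. The associated base points $p_k\in C^1$ lie in this shrinking sequence of cubes and, by compactness of $C^1$, converge to some $x$. Since $x$ is simultaneously the unique intersection point of the nested cubes $S^2_{L_k}$ from the $C^2$-system, we conclude $x\in C^1\cap C^2$, contradicting our standing assumption.

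The main obstacle I anticipate is the careful execution of the ``swap'' step: one must locate a child of the ambient $C^2$-cube that not only contains the near point $q\in C^2$, but places $q$ deep enough inside the shrunken child to re-establish the invariant with a size comparison that quantitatively respects $\tau(C^1)\tau(C^2)\ge(1-2r)^{-2}$. This is precisely where $r$-uniform density is indispensable---it converts the presence of a suitable ball into the existence of a well-placed child---and where the exponent $2$ in $(1-2r)^{-2}$ enters. A secondary but likely easier subtlety is ensuring that both $\ell(I_k)$ and $\ell(L_k)$ grow without bound, which should follow from the size comparison in the invariant together with the fact that along any branch the radii of system-cubes tend to zero.
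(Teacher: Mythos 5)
The survey does not actually reprove Theorem~\ref{Theo:GapLemma}; it only states it and refers to \cite{y22}. So the comparison here is with the one-dimensional template (the proof of Theorem~\ref{NewhouseGapLemma}) and with the general architecture of the argument in \cite{y22}. Your overall strategy is the right one and matches that architecture: argue by contradiction, build by induction an alternating chain of cubes from the two systems linked by an invariant of the form ``a point of one set sits deep inside a suitably large cube of the other system,'' force both branch lengths to infinity, and extract a common limit point. The base case from hypothesis~(iii) and the endgame (radii tend to zero along each branch, so the limit lies in both sets) are correctly set up.

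The problem is that the inductive step --- the only place where hypotheses~(i) and~(ii) and the constant $(1-2r)^{-2}$ actually do any work --- is not carried out, and you say so yourself. Concretely: in the ``swap'' case you must (a) choose the scale at which to stop descending in the $C^1$ system, since the child of $S^1_{I_k}$ containing $p$ may be far smaller than $r\,\rad(S^2_{L_k})$ and the nearby point $q\in C^2$ (at distance at most $h^2_{L_k}$ from $p$) need not lie anywhere near that child, let alone in its $(1-2r)$-shrinking; (b) verify that $h^2_{L_k}$ is small enough, using the thickness bound $h^2_{L_k}\le \min_i\rad(S^2_{L_k,i})/\tau_2$ \emph{combined} with the product hypothesis $\tau_1\tau_2\ge(1-2r)^{-2}$ (neither $\tau_1$ nor $\tau_2$ is individually bounded below, so the two sides cannot be handled separately); and (c) apply $r$-uniform density to a cube that is actually contained in the parent, which requires tracking that the relevant base point stays in the $(1-2r)$-shrinking at every stage. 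Until these quantitative steps are written down and shown to close the loop, the invariant is not known to be re-establishable, and the argument is a plan rather than a proof. A secondary gap: you pass from $p\in C^1\cap S^1_{I_k}$ to a child $S^1_{I_k,j}$ containing $p$, but the Moran definition in the paper is only the global identity $C=\bigcap_n\bigcup_{\ell(I)=n}S_I$; it does not by itself guarantee that a point of $C^1$ lying in $S^1_{I_k}$ lies in one of its children, so this needs either an extra hypothesis on the system or an argument.
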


Some remarks on this statement are in order. Unlike Newhouse's Gap Lemma, we need the additional ``uniform denseness'' assumption. As mentioned above, this is partly to above systems of cubes that yield an artificially large value for the thickness.

Assumption \eqref{it:iii} is quite mild; it can be seen as a stronger version of the hypotheses (in the original Gap Lemma) that the convex hulls intersect and that each Cantor set is not contained in a gap of the other.

There is a balance between assumptions \eqref{it:i} and \eqref{it:ii}: the first condition is stronger when $r$ is close to $\frac{1}{2}$, and the second condition is stronger when $r$ is close to $0$. Note also that as $r\to 0$, assumption \eqref{it:i} reduces back to the product of the thicknesses being larger than $1$, as in the original Gap Lemma.

An important feature of Newhouses's Gap Lemma is that the hypotheses are robust under perturbations of the Cantor sets. This is also the case for Theorem \ref {Theo:GapLemma}. For example, the assumptions are robust under $C^1$ perturbations whose derivatives are close to the identity applied to the sets, and if the sets are self-homothetic, they are also robust under perturbations of the generating iterated function system - see \cite[Lemmas 7 and 8]{y22} for details.

Recall that S. Biebler \cite{Biebler} defined a notion of thickness and proved a Gap Lemma for a class of dynamically defined compact sets in the plane. Even in this restricted context, Theorem \ref{Theo:GapLemma} applies in many more cases (roughly speaking, Biebler has more restrictive version of each of the assumptions). The definition of thickness and proof of the Gap Lemma are also significantly simpler than Biebler's.

\subsection{An application to directional distance sets}

Given $E \subseteq \R^d$, the distance set of $E$ is
\[
\Delta (E):=\{\|x-y\|_2 : \ x,y \in E\} \subseteq [0,\infty).
\]
It is a major open problem to understand the relationship between the sizes of $E$ and $\Delta(E)$.
\begin{conjecture}[Falconer's distance conjecture]
If $E$ is a compact set with $\dim_H(E)>d/2$, then $\Delta(E)$ has positive Lebesgue measure.
\end{conjecture}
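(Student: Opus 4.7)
The plan is to follow the Fourier analytic framework initiated by Falconer. First I would invoke Frostman's lemma to produce a probability measure $\mu$ supported on $E$ with $\mu(B(x,r)) \leq C r^s$ for some $s > d/2$; equivalently, the $s$-energy $I_s(\mu) := \iint \|x-y\|_2^{-s}\, d\mu(x)\, d\mu(y)$ is finite, which via Plancherel translates to $\int |\widehat\mu(\xi)|^2 |\xi|^{s-d}\, d\xi < \infty$. Let $\nu$ be the pushforward of $\mu\otimes\mu$ under the distance map $(x,y) \mapsto \|x-y\|_2$; then $\nu$ is supported on $\Delta(E)$, and it suffices to prove that $\nu$ is absolutely continuous with respect to Lebesgue measure on $[0,\infty)$.

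Next, I would reduce the problem to a spherical average estimate. A classical computation (writing $\|\nu\|_{L^2}^2$ in terms of $\widehat\mu$ via Plancherel and polar coordinates) shows that $\|\nu\|_{L^2}^2$ is controlled by Mattila's integral
\[
\mathcal{M}_s(\mu) := \int_1^\infty \Big(\int_{S^{d-1}} |\widehat\mu(r\omega)|^2\, d\sigma(\omega)\Big)^2 r^{d-1}\, dr.
\]
Finiteness of $\mathcal{M}_s(\mu)$ for some Frostman measure with $s > d/2$ would place $\nu \in L^2$, yielding positive Lebesgue measure for $\Delta(E)$. So the task collapses to proving a pointwise-in-$r$ bound of the form $\int_{S^{d-1}} |\widehat\mu(r\omega)|^2\, d\sigma(\omega) \lesssim r^{-t}$ with $t > d/2$, uniformly for Frostman measures of exponent $s$ arbitrarily close to $d/2$.

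The hard part, and the reason this is one of the most famous open problems in geometric measure theory, is precisely this spherical average bound at the critical exponent. Despite decades of deep work using restriction, decoupling, and refined incidence estimates (Wolff, Erdogan, Du--Guth, Guth--Iosevich--Ou--Wang, and others), the sharp estimate remains out of reach; the best planar result currently gives the conclusion only for $\dim_H(E) > 5/4$, and higher-dimensional analogues are similarly off the conjectured threshold. An alternative route, more in the spirit of this survey, would be to ask whether a large-thickness hypothesis on $E \subseteq \R^d$ (in the sense of Definition \ref{def:thickness-Rd}) already forces $\Delta(E)$ to have positive Lebesgue measure; Theorem \ref{Theo:GapLemma} provides a natural starting point, since one can try to intersect $E$ with translates of itself along a continuum of directions. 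However, bridging any such thickness-based argument to the Hausdorff-dimensional formulation of Falconer's conjecture would require substantial new ideas, since large Hausdorff dimension is compatible with zero thickness.
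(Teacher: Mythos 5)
The statement you were asked about is labelled a \emph{conjecture} in the paper, and the paper offers no proof of it: Falconer's distance conjecture is a famous open problem, and the survey states it only as motivation for the weaker, thickness-based results that follow. So there is no proof in the paper to compare yours against, and you were right not to manufacture one. Your writeup correctly identifies the situation: the reduction via a Frostman measure of exponent $s>d/2$, the passage to the distance measure $\nu$, the control of $\|\nu\|_{L^2}^2$ by Mattila's integral, and the identification of the spherical average decay estimate as the genuinely open step are all the standard (and accurate) outline of why the problem remains unsolved; your citation of the state of the art (e.g.\ the $5/4$ threshold in the plane due to Guth, Iosevich, Ou and Wang) is also correct. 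The only caveat worth recording is that what you have written is, by your own admission, not a proof but a description of an obstruction; if this were spliced in as a ``proof'' it would have to be clearly marked as a discussion of the open problem. Your closing remark is also consonant with the paper's actual direction: the survey does exactly what you suggest, replacing the Hausdorff-dimension hypothesis by a thickness hypothesis and using the higher-dimensional Gap Lemma to obtain an explicit interval $[0,a]$ inside every directional distance set $\Delta_v(C)$ --- a much stronger conclusion under a much stronger hypothesis, which (as you note) does not bear on the conjecture itself because large Hausdorff dimension is compatible with zero thickness.
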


There are many partial results and variants. One of them involves  investigating conditions under which the distance set has non-empty interior. The best result in this direction is due to Mattila and Sjölin:

\begin{theorem}[Mattila and Sjölin \cite{MattilaSjolin}]
If $E$ is a compact set with $\dim_H(E)>\frac{d+1}{2}$, then the distance set $\Delta(E)$ has non-empty interior.
\end{theorem}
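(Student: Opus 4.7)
The plan is to use Frostman's Lemma to produce a probability measure on $E$, push it forward under the distance map to obtain a measure $\nu$ on $\Delta(E)$, and then apply Fourier analysis to show that $\nu$ has a continuous density on $(0,\infty)$. Any probability measure with a continuous density is positive on some open interval, and $\Delta(E)$ is closed (since $E \times E$ is compact and the distance function is continuous), so this open interval will sit inside $\Delta(E)$.

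Fix $s \in \bigl(\tfrac{d+1}{2},\, \dim_H(E)\bigr)$. Frostman's Lemma provides a probability measure $\mu$ supported on $E$ with finite $s$-energy, which, by the standard identity relating Riesz energy to the Fourier transform on $\R^d$, is equivalent to $\int |\hat{\mu}(\xi)|^2\, |\xi|^{s-d}\, d\xi < \infty$. Define $\nu := D_{*}(\mu \times \mu)$ with $D(x,y) := \|x-y\|_2$. For a test function $\varphi \in C_c^\infty((0,\infty))$ and its smooth, compactly supported radial extension $\Phi(z) := \varphi(\|z\|_2)$, Fubini together with Fourier inversion give
\[
\int \varphi\, d\nu \;=\; \iint \Phi(x-y)\, d\mu(x)\, d\mu(y) \;=\; \int \hat{\Phi}(\xi)\, |\hat{\mu}(\xi)|^2\, d\xi,
\]
and a polar decomposition of $\hat{\Phi}$ produces the candidate density
\[
f(t) \;=\; c_d\, t^{d-1} \int \hat{\sigma}(t\xi)\, |\hat{\mu}(\xi)|^2\, d\xi,
\]
where $\sigma$ is the surface measure on the unit sphere $S^{d-1}$.

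The analytic heart of the argument is the classical Bessel-type decay $|\hat{\sigma}(\eta)| \lesssim (1+|\eta|)^{-(d-1)/2}$. For $t$ in any compact subinterval of $(0,\infty)$, this yields a uniform majorant $|f(t)| \lesssim \int |\hat{\mu}(\xi)|^2\, (1+|\xi|)^{-(d-1)/2}\, d\xi$, which is finite because $s > (d+1)/2$ forces $s - d > -(d-1)/2$, so that $(1+|\xi|)^{-(d-1)/2}$ is dominated at infinity by $|\xi|^{s-d}$. Dominated convergence with the same majorant, combined with continuity of $t \mapsto \hat{\sigma}(t\xi)$, then shows $f$ is continuous on $(0,\infty)$. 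Since $E$ contains more than one point, $\nu((0,\infty)) > 0$, so $f(t_0) > 0$ at some $t_0$; continuity then yields an open interval $U \ni t_0$ on which $f > 0$, and $U \subseteq \supp(\nu) \subseteq \overline{\Delta(E)} = \Delta(E)$. I expect the main obstacle to be the sharp exponent $(d-1)/2$ in the Bessel decay: paired against the Riesz $s$-energy, this rate is exactly what forces the threshold $(d+1)/2$, and the decay itself rests on a stationary-phase / Bessel function computation which I would quote from the literature rather than reproduce.
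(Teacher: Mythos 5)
The paper does not prove this theorem; it is quoted from Mattila--Sj\"olin as a known result, so there is no internal proof to compare against. Your argument is the classical one and is correct: Frostman measure with finite $s$-energy for some $s\in(\tfrac{d+1}{2},\dim_H(E))$, pushforward under the distance map, the spherical-average density $f(t)=c_d\,t^{d-1}\int\hat{\sigma}(t\xi)\,|\hat{\mu}(\xi)|^2\,d\xi$, and the decay $|\hat{\sigma}(\eta)|\lesssim(1+|\eta|)^{-(d-1)/2}$ paired against the energy condition exactly at the threshold $s>\tfrac{d+1}{2}$. Two cosmetic points: to get finite $s$-energy you should apply Frostman with an exponent strictly between $s$ and $\dim_H(E)$ (a Frostman measure of exponent $s$ only guarantees finite $t$-energy for $t<s$); and the correct reason that $\nu(\{0\})=0$ is that $\mu$ is non-atomic (which follows from the finite energy), not merely that $E$ has more than one point. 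Neither affects the validity of the proof.
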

This result does not give quantitative bounds on the size of a ball in $\Delta(E)$, nor does it guarantee that $0$ is an interior point of $\Delta(E)$.

We know that sets with large thickness have large Hausdorff dimension. So one can ask whether we can get stronger consequences for sets of large thickness. We will see that the Gap Lemma provides such a result.

For a fixed  direction $v \in S^{(d-1)}$, we say that $t\geq 0$ is a distance between points of $E$ in direction $v$ if there are $e_1$ and $e_2$ in $E$ such that $e_1-e_2=tv$. We define $\Delta_v (E)$ as the set of all distances between points in the set $E$ in direction $v$. Of course, $\Delta_v (E) \subseteq \Delta(E)$.

Applying the Gap Lemma to $E$ and $E-tv$, one can get that $\Delta_v(E)$ contains an explicit uniform interval for any direction $v$:
\begin{corollary}
Let $E=\bigcap_{n \geq 0} \bigcup_{\ell (I)=n}S_I$ be a compact set in $\R^d$ so that there exists $r \in (0, \frac{1}{3}]$ satisfying
\begin{itemize}
\item $\tau (C, \{S_I\}_I) \geq \frac{1}{1-2r}$
\item $\{S_I\}_I$ is $r$-uniformly dense with respect to $C$
\end{itemize}
Then, there is $a>0$ (depending only on $r$ and the radius of $S_\varnothing$) such that for any direction $v \in S^{(d-1)}$ we have
\[
[0,a]\subseteq \Delta_v (C).
\]
\end{corollary}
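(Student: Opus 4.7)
The plan is to derive the corollary as a direct application of the higher-dimensional Gap Lemma (Theorem \ref{Theo:GapLemma}) to the pair $(C^1, C^2) := (C, C-tv)$ for every $t$ in an explicit interval. Set $s := \rad(S_\emptyset)$ and choose $a := 2rs$, which has exactly the required dependence on $r$ and $\rad(S_\emptyset)$. Since $0 \in \Delta_v(C)$ is automatic, it suffices to prove that for every $t \in (0, a]$ and every direction $v \in S^{(d-1)}$ the intersection $C \cap (C-tv)$ is nonempty: any $x$ in this intersection satisfies $x+tv \in C$, so $t = \|tv\|_2 \in \Delta_v(C)$.

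To apply the Gap Lemma to $C$ with the system $\{S_I\}_I$ and to $C-tv$ with the translated system $\{S_I - tv\}_I$, I will verify hypotheses (i)--(iii) in turn. Hypotheses (i) and (ii) follow from translation invariance of the ingredients of the definitions: both the radii of the cubes and the quantities $h_I = \max_{x \in S_I}\dist_\infty(x,C)$ are unchanged by the translation $x \mapsto x-tv$, so $\tau(C-tv, \{S_I-tv\}_I) = \tau(C, \{S_I\}_I) \geq \frac{1}{1-2r}$, and the product of thicknesses is at least $\frac{1}{(1-2r)^2}$; similarly $\{S_I-tv\}_I$ is $r$-uniformly dense because any candidate cube $B \subseteq S_I-tv$ pulls back to $B+tv \subseteq S_I$ of the same radius.

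Hypothesis (iii) is the only place where a quantitative argument is needed, and it is precisely where the restriction $t \leq a = 2rs$ enters. The radius condition $\rad(S_\emptyset^1) \geq r\,\rad(S_\emptyset^2)$ is immediate since both root cubes have radius $s$ and $r \leq 1/3 < 1$. For the intersection condition, writing $c$ for the center of $S_\emptyset$, the dilated cube $(1-2r)S_\emptyset^2$ is the $\dist_\infty$-ball centered at $c-tv$ of radius $(1-2r)s$. Because $v$ is a Euclidean unit vector, $\|tv\|_\infty \leq \|tv\|_2 = t \leq 2rs$, which forces this ball to sit inside $S_\emptyset$. Its radius $(1-2r)s$ is at least $rs = r\,\rad(S_\emptyset)$ (again using $r \leq 1/3$), so the $r$-uniform denseness of $\{S_I\}_I$ applied at the root produces a child $S_j$ contained in $(1-2r)S_\emptyset^2$; by the Moran construction $S_j$ meets $C$, and therefore so does $(1-2r)S_\emptyset^2$.

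With all three hypotheses in hand the Gap Lemma gives a point in $C \cap (C-tv)$, completing the reduction. The only real subtlety in the argument is the calibration of $a$: because the $r$-uniform denseness is only asserted for cubes contained in cubes of the system, one must choose $a$ small enough that after translating $S_\emptyset$ by $tv$ the dilated cube $(1-2r)S_\emptyset^2$ still fits inside the original $S_\emptyset$. The threshold $a = 2r\,\rad(S_\emptyset)$ is precisely what this containment forces, and no further difficulty is anticipated.
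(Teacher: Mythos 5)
Your proposal is correct and follows essentially the same route as the paper: apply the higher-dimensional Gap Lemma to $C$ and its translate, with hypotheses (i)--(ii) holding by translation invariance and hypothesis (iii) secured by applying $r$-uniform denseness to the shrunken translated root cube (using $1-2r\ge r$ for $r\le\frac13$). The only difference is the value of the constant: you take $a=2r\,\rad(S_\varnothing)$ from a concentric reading of $(1-2r)S_\varnothing^2$, while the paper obtains the slightly larger $a=\frac{2r}{1-2r}\rad(S_\varnothing)$; either choice meets the corollary's requirement that $a$ depend only on $r$ and $\rad(S_\varnothing)$.
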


\begin{proof}
We can assume without loss of generality that $S_{\emptyset}=B[0,1]$.

Let $v$ be any vector in $S^{(d-1)}$. We are going to show that the sets $C$ and $C+tv$ satisfy the hypothesis of the Gap Lemma (Theorem \ref{Theo:GapLemma}) for $t \in [\frac{-2r}{1-2r}, \frac{2r}{1-2r}]$. Then we will have $C \cap (C+tv) \neq \emptyset$ for any  $t \in [0, \frac{2r}{1-2r}]$, and so $[0,\frac{2r}{1-2r}] \subseteq \Delta_v (C)$.

By assumption,  $\{S_I\}_I$ is uniformly dense with respect to $C$. Since thickness is preserved by translations (translating also the system of balls), we have
\[
\tau(C, \{S_I\}_I) \tau(C+tv, \{S_I+tv\}_I) \geq \frac{1}{(1-2r)^2}.
\]
It remains to show that $C \cap (1-2r) \left(B[0,1]+tv \right)\neq \emptyset$. Since $r \in (0, \frac{1}{3}]$, we have that $(1-2r) \left(B[0,1]+tv \right)$ is a ball of radius at least $r$.
And since $t \in [0, \frac{2r}{1-2r}]$, we have $(1-2r) \left(B[0,1]+tv \right) \subseteq B[0,1]$. Hence, by $r$-denseness, there is a child $S_i$ of $S_{\emptyset}$, contained in $(1-2r) \left(B[0,1]+tv \right)$. In particular, $C \cap (1-2r) \left(B[0,1]+tv \right)\neq \emptyset$.
\end{proof}

What is new compared to Mattila and Sjölin's result is that this holds in every direction, and we get a uniform explicit interval containing $0$. The assumptions however are much stronger.

\subsection{Patterns in thick sets in $\R^d$}

Recall from Theorem \ref{thm:y21} that sets of large Newhouse thickness $\tau$ contain homothetic copies of all finite sets of certain explicit cardinality $N(\tau)$, and that this result is based on showing that thick sets are winning for the potential game. Both the result and the approach generalize to both our definitions of thickness in $\R^d$. In the case of Definition \ref{def:thickness-Rd}, the proof uses a variant of the game in which Alice erases neighbourhoods of cube boundaries (spheres in the $d_\infty$ metric) instead of balls. Since the Gap Lemma is not used in the arguments, no denseness assumption is needed for these results. See \cite[Theorem 7]{FalconerYavicoli} and \cite[Theorem 20]{y22} for details.



\begin{thebibliography}{10}

\bibitem{Astels}
Stephen Astels.
\newblock Cantor sets and numbers with restricted partial quotients.
\newblock {\em Trans. Amer. Math. Soc. 352 (2000), no. 1, 133--170.}


\bibitem{Biebler}
Sébastien Biebler.
\newblock A complex gap lemma.
\newblock {\em Proc. Amer. Math. Soc. 148 (2020), no. 1, 351--364.}


\bibitem{BP22}
Zack Boone and Eyvindur Ari Palsson.
\newblock A pinned {M}attila-{S}jölin type theorem for product sets.
\newblock Preprint,  arXiv:2210.00675.

\bibitem{orponen}
Tuomas Sahlsten, Borys Kuca and Tuomas Orponen.
\newblock On a continuous S´ark¨ozy type problem.
\newblock Preprint, arXiv:2110.15065.

\bibitem{BFS}
Ryan Broderick, Lior Fishman and David Simmons.
\newblock Quantitative results using variants of Schmidt's game: dimension bounds, arithmetic progressions, and more.
\newblock {\em Acta Arith. 188 (2019), no. 3, 289--316.}

\bibitem{CLP}
Vincent Chan, Izabella Łaba and Malabika Pramanik.
\newblock Finite configurations in sparse sets.
\newblock {\em J. Anal. Math. 128 (2016), 289--335.}

\bibitem{falconer_1985}
Kenneth Falconer
\newblock The geometry of fractal sets.
\newblock {\em Cambridge Tracts in Mathematics, 85. Cambridge University Press, Cambridge, 1986.}

\bibitem{FalconerTechniques}
Kenneth Falconer.
\newblock Techniques in fractal geometry.
\newblock {\em John Wiley \& Sons, Ltd., Chichester, 1997.}

\bibitem{FalconerFG}
Kenneth Falconer.
\newblock Fractal geometry. Mathematical foundations and applications. Third edition.
\newblock {\em John Wiley \& Sons, Ltd., Chichester, 2014.}

\bibitem{FalconerYavicoli}
Kenneth Falconer and Alexia Yavicoli.
\newblock Intersections of thick compact sets in $\mathbb{R}^d$.
\newblock {\em Math. Z.,301(3):2291–2315, 2022.}

\bibitem{DJF}
De-Jun Feng and Yu-Feng Wu.
\newblock On arithmetic sums of fractal sets in $\mathbb{R}^d$.
\newblock {\em J. Lond. Math. Soc. (2),104(1):35–65, 2021.}

\bibitem{HLP}
Kevin Henriot, Izabella Łaba, and Malabika Pramanik.
\newblock On polynomial configurations in fractal sets.
\newblock {\em Anal. PDE, 9(5):1153–1184, 2016.}

\bibitem{HKY}
Brian Hunt, Ittai Kan and James Yorke.
\newblock When Cantor sets intersect thickly.
\newblock {\em Trans. Amer. Math. Soc., 339(2):869–888, 1993.}

\bibitem{Kel99}
Tamas Keleti.
\newblock A 1-dimensional subset of the reals that intersects each of its translates in at most a single point.
\newblock {\em Real Anal. Exchange, 24(2):843–844, 1998/99.}

\bibitem{Kel08}
Tamas Keleti.
\newblock Construction of one-dimensional subsets of the reals not containing similar copies of given patterns.
\newblock {\em Anal. PDE, 1(1):29–33, 2008.}

\bibitem{LP09}
Izabella Łaba and Malabika Pramanik.
\newblock Arithmetic progressions in sets of fractional dimension.
\newblock {\em Geom. Funct. Anal., 19(2):429–456, 2009.}

\bibitem{Maga}
Peter Maga.
\newblock Full dimensional sets without given patterns.
\newblock {\em Real Anal. Exchange, 36(1):79–90,2010/11.}

\bibitem{Mathe}
Andras Mathe.
\newblock Sets of large dimension not containing polynomial configurations.
\newblock {\em Adv. Math.,316:691–709, 2017.}

\bibitem{MattilaSjolin}
Pertti Mattila and Per Sjolin.
\newblock Regularity of distance measures and sets.
\newblock {\em Math. Nachr., 204:157–162, 1999.}

\bibitem{MT21}
Alex McDonald and Krystal Taylor.
\newblock Finite point configurations in products of thick Cantor sets and a robust nonlinear Newhouse gap lemma.
\newblock Preprint, arXiv:2111.09393, 2021.

\bibitem{Newhouse}
Sheldon Newhouse.
\newblock Nondensity of axiom $A(a)$ on $S^2$.
\newblock {\em In Global Analysis (Proc. Sympos. Pure Math., Vol. XIV, Berkeley, Calif., 1968), pages 191–202. Amer. Math. Soc., Providence, R.I.,
1970.}

\bibitem{Newhouse2}
Sheldon Newhouse.
\newblock The abundance of wild hyperbolic sets and nonsmooth stable sets for diffeomorphisms.
\newblock {\em Inst. Hautes ´Etudes Sci. Publ. Math., (50):101–151, 1979.}

\bibitem{PalisTakens}
Jacob Palis and Floris Takens.
\newblock Hyperbolicity and sensitive chaotic dynamics at homoclinic bifurcations
\newblock {\em volume 35 of Cambridge Studies in Advanced Mathematics. Cambridge University Press, Cambridge, 1993. Fractal dimensions and infinitely many attractors.}

\bibitem{ST20}
Karoly Simon and Krystal Taylor.
\newblock Interior of sums of planar sets and curves.
\newblock {\em Math. Proc. Cambridge Philos. Soc., 168(1):119–148, 2020.}

\bibitem{Williams}
Williams R. F.
\newblock How big is the intersection of two thick Cantor sets?
\newblock {\em In Continuum theory and dynamical systems (Arcata, CA, 1989), volume 117 of Contemp. Math., pages 163–175. Amer. Math. Soc., Providence, RI, 1991.}

\bibitem{YavLinear}
Alexia Yavicoli.
\newblock Large sets avoiding linear patterns.
\newblock {\em Proc. Amer. Math. Soc., 149(10):4057–4066,2021.}

\bibitem{Y21}
Alexia Yavicoli.
\newblock Patterns in thick compact sets.
\newblock {\em Israel J. Math., 244(1):95–126, 2021.}

\bibitem{y22}
Alexia Yavicoli.
\newblock Thickness and a gap lemma in $\mathbb{R}^d$.
\newblock To appear in International Mathematics Research Notices (IMRN), Preprint arXiv:2204.08428, 2022.

\bibitem{Y20}
Han Yu.
\newblock Fractal projections with an application in number theory.
\newblock {\em Ergodic Theory Dynam. Systems, to appear, 2020. arXiv:2004.05924}
\end{thebibliography}

\end{document}